\DeclarePairedDelimiter\floor{\lfloor}{\rfloor}
\newtheorem{lemma}{Lemma}[section]
\newtheorem{proposition}[lemma]{Proposition}
\newtheorem{theorem}[lemma]{Theorem}
\newtheorem{corollary}[lemma]{Corollary}
\newtheorem{conjecture}[lemma]{Conjecture}
\theoremstyle{definition}
\newtheorem{definition}[lemma]{Definition}
\newtheorem{remark}[lemma]{Remark}
\def\C{{\mathbb C}}
\def\Gg{\mathcal{G}}
\def\Hh{\mathcal{H}}
\def\Ll{\mathcal{L}}
\def\Mm{\mathcal{M}}
\def\Nn{\mathcal{N}}
\def\Pp{\mathcal{P}}
\def\Rr{\mathcal{R}}
\def\AA{\mathfrak{A}}
\def\BB{\mathfrak{B}}
\def\II{\mathfrak{I}}
\def\MM{\mathfrak{M}}
\def\PP{\mathfrak{P}}
\def\SS{\mathfrak{S}}
\def\III{\mathbf{I}}
\def\NNN{\mathbf{N}}
\def\XXX{\mathbf{X}}
\def\ker{\operatorname{ker}}
\def\Hom{\operatorname{Hom}}
\def\End{\operatorname{End}}
\def\Ind{\operatorname{Ind}}
\def\cInd{c\text{-}\operatorname{Ind}}
\def\Res{\operatorname{Res}}
\def\Gal{\operatorname{Gal}}
\def\Irr{\mathrm{Irr}}
\def\Mat{\mathrm{Mat}}
\def\GL2{\mathbf{GL}_2}
\def\SL2{\mathbf{SL}_2}
\def\GLN{\mathbf{GL}_N}
\def\SLN{\mathbf{SL}_N}
\def\PGL{\mathbf{PGL}}
\def\Rep{\mathrm{Rep}}
\def\rec{\mathrm{rec}}
\newcommand{\dedekind}{{\scriptstyle\mathcal{O}}}
\def\XXXnr{\mathbf{X}_{\mathrm{nr}}}
\def\St{\mathrm{St}}
\renewcommand\subsection{\@startsection {subsection}{1}{\z@}%
                                        {-3.5ex \@plus -1ex \@minus -.2ex}%
                                        {2.3ex \@plus.2ex}%
                                        {\normalfont\textit}}
\begin{document}
\newtheorem*{recall}{Recall}
\newtheorem*{notat}{Notations}
\newtheorem*{problem}{Problem}
\newtheorem*{fact}{Fact}

\title{Unicity of types for supercuspidal representations of $p$-adic $\SL2$}
\author{Peter Latham\footnote{The University of East Anglia, email: peter.latham@uea.ac.uk}}
\date{\vspace{-5ex}}
\maketitle

\abstract{\noindent\normalsize We consider the question of unicity of types on maximal compact subgroups for supercuspidal representations of $\SL2$ over a nonarchimedean local field of odd residual characteristic. We introduce the notion of an archetype as the $\SL2$-conjugacy class of a typical representation of a maximal compact subgroup, and go on to show that any archetype in $\SL2$ is restricted from one in $\GL2$. From this it follows that any archetype must be induced from a Bushnell--Kutzko type. Given a supercuspidal representation $\pi$, we give an additional explicit description of the number of archetypes admitted by $\pi$ in terms of its ramification. We also describe a relationship between archetypes for $\GL2$ and $\SL2$ in terms of $L$-packets, and deduce an inertial Langlands correspondence for $\SL2$.\footnote{Keywords: Bushnell--Kutzko types, $p$-adic groups, special linear group, Langlands correspondence. MSC classification: 22E50}}
\normalsize
\setlength{\parindent}{0pt}

\section{Introduction}

The local Langlands conjectures provide natural correspondences between certain representations of the Weil--Deligne group of a nonarchimedean local field $F$ and (packets of) smooth, irreducible representations of certain reductive groups defined over $F$. While this is theoretically very important, it is rather difficult to obtain explicit information from this correspondence. The theory of types arose as a means of obtaining an explicit understanding of the representation theory of $p$-adic groups, partly in the hope that this would allow one to obtain explicit information on the Weil group side of the Langlands correspondence.\\

Following Bernstein, one may factorize the category of smooth representations of a reductive $p$-adic group $\Gg$ into a direct product of full subcategories consisting of representations whose irreducible subquotients, viewed as representations of the Weil--Deligne group via local Langlands, identify upon restriction to the inertia group. This suggests an approach to obtaining explicit information about the representation theory of $\Gg$ by restriction of these natural families of representations to the compact open subgroups of $\Gg$. Formally, given a Bernstein component $\Rr$ of the category of smooth representations of $\Gg$, one says that a \emph{type} for $\Rr$ is a smooth irreducible representation $\lambda$ of a compact open subgroup $J$ of $\Gg$, such that an irreducible representation $\pi$ of $\Gg$ contains $\lambda$ upon restriction to $J$ if and only if $\pi\in\Rr$. Similarly, we say that $(J,\lambda)$ is \emph{typical} for $\Rr$ if any irreducible representation $\pi$ containing $\lambda$ must be contained in $\Rr$. Thus, types provide a means of transferring problems regarding the (infinite-dimensional) representation theory of $\Gg$ into problems regarding the (finite-dimensional) representation theory of its compact open subgroups, allowing for more explicit results. One would then hope to be able to transfer this information to the Weil group side of the Langlands correspondence in order to obtain results about Weil--Deligne representations in terms of their restriction to inertia. Results in this direction have been highly influential, allowing for the proof of an inertial local Langlands correspondence for $\GLN(F)$ in \cite{paskunas2005unicity}, and for progress towards the Breuil--M\'{e}zard conjecture, as in \cite{breuil2002multiplicites}.\\

As with the representation theory of $\Gg$, the natural approach to the construction of types is to proceed by parabolic induction -- that is, to construct types for the supercuspidal representations of all Levi subgroups of $\Gg$, and then to find some operation compatible with parabolic induction which will allow for the construction of types for all of the Bernstein components of $\Gg$ (which we now know to be given by the Bushnell--Kutzko theory of covers, as in \cite{bushnell1998structure}). Considerable progress has been made in this direction: there are now constructions of types for all of the Bernstein components of $\Gg$ when $\Gg$ is a general or special linear group (\cite{bushnell1993admissible}, \cite{bushnell1993sln}, \cite{bushnell1994sln}, \cite{bushnell1999semisimple} and \cite{goldberg2002types}), when $\Gg$ is a classical group in odd residual characteristic (\cite{stevens2008supercuspidals} and \cite{Miyauchi2014semisimple}), and when $\Gg$ is an inner form of a general linear group (\cite{secherre2005simpletypes} \cite{secherre2008supercuspidals} and \cite{secherre2012semisimple}), and constructions of types for all tamely ramified supercuspidal representations of arbitrary groups $\Gg$ over fields of characteristic zero (\cite{yu2001construction} and \cite{kim2007exhaustion}).\\

In each of these cases, the approach has been to construct, by a series of successively stronger approximations, an explicit type for each Bernstein component of $\Gg$. Moreover, all constructions of types for supercuspidals to date have led to resolutions of the long-standing folklore conjecture that any supercuspidal representation of $\Gg$ should be compactly induced from an irreducible representation of an open, compact-modulo-centre subgroup of $G$. However, it is \emph{a priori} unclear from the abstract definition of a type that there should not exist other examples of types, which are perhaps less suited to applications. This naturally raises the question of the ``unicity of types'', which asks whether or not this is the case.\\

Call any type arising from the constructions discussed above a ``Bushnell--Kutzko  type''. It is simple to see that, given a Bushnell--Kutzko type $(J,\lambda)$ and a maximal compact subgroup $K$ of $\Gg$ which contains $J$, the irreducible components of the representation obtained by inducing $\lambda$ to $K$ are typical representations. One then wishes to show that repeating this process for all Bushnell--Kutzko types provides a complete list of typical representations of maximal compact subgroups of $\Gg$. As well as being of interest in itself as a resolution of the theory of types for $\Gg$, such a result also turns out to be precisely what is required in order to allow applications to results such as inertial Langlands correspondences and Breuil--M\'{e}zard-like conjectures.\\

The question of unicity was first considered by Henniart in the appendix to \cite{breuil2002multiplicites}, precisely in order to allow an application to the Breuil--M\'{e}zard conjecture, where he obtains a positive answer for all representations of $\GL2(F)$ (at least when the cardinality of the residue field is at least $3$). Since then, the result has been extended to cover all supercuspidal representations of $\GLN(F)$ by Paskunas in \cite{paskunas2005unicity} and to cover all representations of $\mathbf{GL}_3(F)$ (\cite{nadimpalli2014gl3}), and most representations of $\mathbf{GL}_4(F)$, as well as all representations of $\GLN(F)$ of depth zero by Nadimpalli in work to appear.\\

In this paper, we take the first steps towards unicity for special linear groups, providing a positive answer for supercuspidal representations of $\SL2(F)$ when $F$ is of odd residual characteristic. The main difference between the cases of $\GL2(F)$ and $\SL2(F)$ is that there are now two conjugacy classes of maximal compact subgroups. Given an irreducible representation $\pi$ of $\SL2(F)$, there will clearly always be a typical representation of at least one of these maximal compact subgroups, obtained by inducing up a Bushnell--Kutzko type for $\pi$, but not necessarily on both. In order to deal with these complications, we introduce the notion of an \emph{archetype}, which is an $\SL2(F)$-conjugacy class of typical representations $(\mathscr{K},\tau)$ for some maximal compact subgroup $\mathscr{K}$. With this in place, we are able to give a natural extension of the unicity of types to this setting, providing a positive answer for supercuspidal representations in section \ref{unicity}.\\

In section \ref{supercuspidals}, we go on to provide a more explicit description of the archetypes for the supercuspidal representations of $\SL2(F)$, allowing the following refinement of our unicity result:
\begin{theorem}
Let $\pi$ be a supercuspidal representation of $\SL2(F)$, for $F$ a nonarchimedean local field of odd residual characteristic. If $\pi$ is of integral depth, then there exists a unique archetype for $\pi$, while if $\pi$ is of half-integral depth then there exist precisely two archetypes for $\pi$, which are $\GL2(F)$-conjugate but not $\SL2(F)$-conjugate.
\end{theorem}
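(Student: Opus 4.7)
The plan is to combine the unicity result of Section~\ref{unicity} with the explicit Bushnell--Kutzko classification of supercuspidal types for $\GL2(F)$, reducing the counting of archetypes for $\pi$ to a case analysis on the underlying hereditary order. By Section~\ref{unicity}, every archetype for $\pi$ is induced from a Bushnell--Kutzko type $(\tilde J,\tilde\lambda)$ for some $\GL2(F)$-supercuspidal $\tilde\pi$ whose restriction to $\SL2(F)$ contains $\pi$; this type is built from a simple stratum $[\mathfrak A,n,0,\beta]$ in $M_2(F)$, and the hereditary $\mathcal{O}_F$-order $\mathfrak A$ is either maximal or principal of period two, according respectively to whether $E:=F[\beta]$ is unramified or totally ramified over $F$. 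Since the depth of $\tilde\pi$ is $n/e(\mathfrak A\mid\mathcal{O}_F)$ and depth is preserved on restriction to $\SL2(F)$, this dichotomy matches exactly the integral versus half-integral depth cases of the theorem.

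Suppose first that $\pi$ has integral depth, so that $\mathfrak A$ is $\GL2(F)$-conjugate to $M_2(\mathcal{O}_F)$ and $\tilde J\subset F^\times\GL2(\mathcal{O}_F)$. Then $J:=\tilde J\cap\SL2(F)\subset\SL2(\mathcal{O}_F)=:K_0$, and inducing the Clifford component of $\tilde\lambda|_J$ associated to $\pi$ to $K_0$ yields an archetype supported on $K_0$. To rule out a second archetype supported on the other maximal compact $K_1$, I would observe that any such must come from a $\GL2(F)$-conjugate of $(\tilde J,\tilde\lambda)$ by an element $g$ with $v_F(\det g)$ odd; the restricted components of the conjugated type correspond to the images under $g$-conjugation of the summands in $\tilde\pi|_{\SL2(F)}$, and an analysis of this action on the Clifford decomposition shows that the component landing on $K_1$ is typical for a distinct member $\pi^g\neq\pi$ of the $L$-packet, not for $\pi$ itself. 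Combined with the $\GL2(F)$-uniqueness of Bushnell--Kutzko types, this gives precisely one archetype for $\pi$.

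Now suppose $\pi$ has half-integral depth, so that $\mathfrak A$ is of period two and $K(\mathfrak A)=\langle\Pi_E,\mathfrak A^\times\rangle$, where $\Pi_E\in\tilde J$ is a uniformizer of the ramified extension $E=F[\beta]$ and $v_F(\det\Pi_E)$ is odd. The restriction $J=\tilde J\cap\SL2(F)$ lies in the Iwahori $\mathfrak A^\times\cap\SL2(F)$, which is contained in both $K_0$ and $K_1:=\Pi_E K_0\Pi_E^{-1}$; inducing the relevant component of $\tilde\lambda|_J$ to each of these maximal compacts therefore produces a typical representation. Because $\Pi_E\in\tilde J$ acts on $\tilde\lambda$ by inner automorphism, this $\Pi_E$-conjugation preserves the Clifford component associated to $\pi$, so both resulting typical representations are archetypes for the same $\pi$; they are $\GL2(F)$-conjugate via $\Pi_E$ but not $\SL2(F)$-conjugate, since $K_0$ and $K_1$ are not $\SL2(F)$-conjugate, yielding exactly two archetypes. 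The main obstacle throughout is this ``attribution step'' --- determining precisely which $L$-packet member each constructed typical representation belongs to --- which reduces in both cases to analysing the action of $\GL2(F)/F^\times\SL2(F)$ on the Clifford decomposition of $\tilde\lambda|_J$, governed by whether the relevant conjugating element lies in $\tilde J$.
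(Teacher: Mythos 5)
Your overall strategy matches the paper's: reduce to Bushnell--Kutzko simple types via the unicity theorem, then count via a case analysis on the ramification of the underlying stratum. The ramified (half-integral depth) case is essentially the paper's argument --- inducing the same $\bar G$-simple type $(\bar J,\mu)$ up from the Iwahori subgroup into both maximal compacts gives the two archetypes --- and your $\Pi_E$-conjugation observation is a reasonable way of seeing the $\GL_2(F)$-conjugacy (though it silently uses that $\bar\lambda=\lambda\downharpoonright_{\bar J}$ is irreducible when $\lambda$ is one-dimensional, so ``preserves the Clifford component'' is automatic rather than a content-bearing step).

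The integral-depth case is where your argument has a genuine gap, and it differs substantively from the paper's. You assert that any archetype supported on $K_1$ ``must come from a $\GL_2(F)$-conjugate of $(\tilde J,\tilde\lambda)$ by an element $g$ with $v_F(\det g)$ odd,'' and then argue this archetype is typical for $\pi^g\neq\pi$. But this first assertion presupposes precisely the fact you are trying to establish: \emph{a priori}, the subgroup $\bar J$ might itself be contained in a maximal compact of the $K_1$-class, in which case the resulting typical representation on $K_1$ is obtained by inducing from $\bar J$ directly, with no odd-valuation conjugation involved, and your $L$-packet dichotomy gives no contradiction. The paper closes exactly this hole with a structural argument: $\bar J$ contains $\ker\mathrm N_{E/F}\supset\mu_{q+1}$, and the Iwahori subgroup $\bar U_\MM\cap{}^\eta\bar U_\MM$ contains no element of order $q+1$, so $\bar J$ cannot lie in two non-conjugate maximal compacts. (In other words, in the Bruhat--Tits tree of $\SL_2(F)$, $\bar J$ fixes a unique vertex.) Without this input, the ``attribution step'' you flag as the main obstacle does not get off the ground. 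A secondary, fillable, gap is that ``the component landing on $K_1$ is typical for a distinct member $\pi^g\neq\pi$'' needs the observation that $v_F(\det)$ is even on $\tilde J$ (since $E/F$ is unramified, $\det(E^\times)=\mathrm N_{E/F}(E^\times)$ consists of even-valuation elements), which is true but unproven in your sketch. Finally, your case split via the simple stratum $[\AA,n,0,\beta]$ does not cover depth zero, for which there is no stratum; there $\bar J=\bar K$ is already maximal and the claim is immediate, but it should be noted separately.
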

We also provide, in Proposition \ref{lpackets}, a description of the relationship between supercuspidal archetypes in $\GL2(F)$ and $\SL2(F)$ in terms of the local Langlands correspondence, which in some sense says that archetypes are functorial with respect to restriction from $\GL2(F)$ to $\SL2(F)$. This allows us to deduce in Corollary \ref{inertialcorrespondence} an extension of Paskunas' inertial Langlands correspondence to our setting.\\

Our method is to transfer Henniart's results on $\GL2(F)$ over to $\SL2(F)$, with the key step being to show that any archetype for an irreducible representation $\bar{\pi}$ of $\SL2(F)$ must be isomorphic to an irreducible component of the restriction of the unique archetype for some irreducible representation $\pi$ of $\GL2(F)$ containing $\bar{\pi}$ upon restriction. This is achieved in Lemma \ref{extensiontok}. From this, it is mostly a case of performing simple calculations to deduce in Theorem \ref{typesaresimple} that our unicity result holds. The explicit counting result on the number of archetypes contained in a supercuspidal representation follows easily from Theorem \ref{typesaresimple}, while we are able to prove in Lemma \ref{restrictiontokbar} a form of converse to Lemma \ref{extensiontok} for the supercuspidal representations, which allows us to easily deduce the remaining results.\\

While we have avoided doing so in this paper, one could have proved the same results by essentially copying the methods used by Henniart for $\GL2(F)$. One may show unicity with respect to a fixed choice of maximal compact subgroup by following Henniart's approach, making only the necessary changes, with the only additional complication being the proof that the integral depth supercuspidal representations admit only a single archetype. For the positive depth representations, this is achievable using a minor variation of Henniart's arguments, but the depth zero representations require more work. The author knows of two approaches in this case: to use the branching rules found in \cite{nevins2013supercuspidal}, or to argue using covers (in the sense of \cite{bushnell1998structure}). The problem with this approach is that there is necessarily a large amount of duplication of effort. While one would expect that such an approach could be made to work for arbitrary $N$, this would require reproving most of the results found in \cite{paskunas2005unicity} with only minor modifications.\\

On the other hand, the approach taken in this paper is largely general, and already gives partial progress towards a general proof of the unicity of types for $\SLN(F)$. In particular, the proof of Lemma \ref{order2} goes through in the general setting without any additional difficulties, suggesting the possibility of applying the results of \cite{paskunas2005unicity} in a similar manner to our use of Henniart's arguments in order to prove an analogue of Lemma \ref{extensiontok}, which the author is hopeful of managing in the near future. In particular, this would lead easily to a positive answer to the question of unicity. The remaining results should then follow without too much difficulty in a similar manner to that here. In particular, this should allow for the following extension of our explicit results on supercuspidals. Given a supercuspidal representation $\bar{\pi}$ of $\SLN(F)$ and a supercuspidal representation $\pi$ of $\GLN(F)$ which contains $\bar{\pi}$ upon restriction, we define the \emph{ramification degree} of $\bar{\pi}$ to be the number $e_{\bar{\pi}}$ such that there are $N/e_{\bar{\pi}}$ characters $\chi$ of $F^\times$ such that $\pi\simeq\pi\otimes(\chi\circ\det)$. This is independent of the choice of $\pi$. Then we make the following conjecture:
\begin{conjecture}
Let $\bar{\pi}$ be a supercuspidal representation of $\SLN(F)$. Then there are precisely $e_{\bar{\pi}}$ archetypes for $\bar{\pi}$, which are $\GLN(F)$-conjugate.
\end{conjecture}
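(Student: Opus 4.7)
The plan is to adapt the strategy of the main $\SL2$ theorem to higher rank, substituting Paskunas' unicity theorem for supercuspidal representations of $\GLN(F)$ from \cite{paskunas2005unicity} in place of Henniart's theorem for $\GL2(F)$. Fix $\bar{\pi}$ supercuspidal in $\bar{G}=\SLN(F)$ and a supercuspidal $\pi$ of $G=\GLN(F)$ containing $\bar{\pi}$ on restriction, and let $(K,\tau)$ denote the unique Paskunas archetype for $\pi$, where $K=\GLN(\dedekind_F)$. Write $X(\pi)$ for the finite group of characters $\chi$ of $F^\times$ satisfying $\pi\simeq\pi\otimes(\chi\circ\det)$, so that $|X(\pi)|=N/e_{\bar{\pi}}$ by definition. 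By the Clifford theory of Gelbart--Knapp, $\pi\vert_{\bar{G}}=\bigoplus_{j=1}^{|X(\pi)|}\bar{\pi}_j$ is multiplicity-free, with the $\bar{\pi}_j$ pairwise $G$-conjugate. The plan is to show that every archetype for $\bar{\pi}$ arises as an irreducible component of the restriction of some $G$-conjugate of $\tau$ to its intersection with $\bar{G}$, and then to count the resulting $\bar{G}$-orbits.

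The essential technical input is the $\SLN$-analog of Lemma~\ref{extensiontok}: every typical pair $(\bar{\mathscr{K}},\sigma)$ for $\bar{\pi}$ is an irreducible $\bar{\mathscr{K}}$-subrepresentation of $\tau'\vert_{\bar{\mathscr{K}}}$ for some $G$-typical pair $(\mathscr{K},\tau')$ with $\mathscr{K}\supset\bar{\mathscr{K}}$ a maximal compact of $G$. As remarked in the introduction, Lemma~\ref{order2} generalizes to arbitrary $N$ without essential change; combining it with Paskunas' unicity in place of Henniart's then yields the reduction, and hence the $\SLN$-analog of Theorem~\ref{typesaresimple}. The complementary statement, in the spirit of Lemma~\ref{restrictiontokbar}, that each irreducible component of $\tau\vert_{K\cap\bar{G}}$ is conversely typical for some $\bar{\pi}_j$, should follow from an intertwining computation on maximal simple types, exploiting that $\pi$ is compactly induced from an extension of $\tau$ to a Bushnell--Kutzko type.

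With both directions in place, the count becomes an orbit-counting problem: there are $N$ $\bar{G}$-conjugacy classes of maximal compact subgroups of $\bar{G}$, permuted transitively by $G$, and by the reduction above, all archetypes of the $\bar{\pi}_j$ assemble into a single $G$-orbit of typical pairs. A direct orbit-stabilizer calculation, in which the stabilizer of an archetype in $G/\bar{G}Z(G)$ is controlled by the character group $X(\pi)$, then forces each $\bar{\pi}_j$ to admit exactly $N/|X(\pi)|=e_{\bar{\pi}}$ archetypes, all $G$-conjugate by construction. Specializing to $N=2$ recovers the main $\SL2$ theorem.

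The main obstacle is the $\SLN$-analog of Lemma~\ref{extensiontok}. Paskunas' proof is phrased in the language of hereditary orders, simple strata, and maximal simple types for $\GLN(F)$, and each step must be translated into a statement whose restriction to $\SLN(F)$ is controllable, requiring delicate tracking of intertwining data, particularly at the level of the maximal tame subfield. The depth-zero case poses additional difficulties: branching rules for $\GLN(\F_q)\downarrow\SLN(\F_q)$ interact nontrivially with Deligne--Lusztig induction from non-split maximal tori, and one must rule out exotic typical representations supported on maximal compacts whose vertex type cannot be reached from a single conjugate of the Paskunas archetype.
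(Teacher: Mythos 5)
This statement is a \emph{conjecture} in the paper, not a theorem: the author explicitly leaves it open, sketching only in the introduction the hope that the $\SL2$ methods extend to $\SLN$ by swapping Paskunas' unicity theorem for Henniart's appendix. There is therefore no proof in the paper against which to compare your argument.

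Your proposal is essentially a restatement of the author's own sketched program, and it stops at the same point the paper does. You correctly identify the pivotal missing ingredient -- the $\SLN$-analogue of Lemma~\ref{extensiontok}, i.e.\ the claim that every archetype for a supercuspidal $\bar\pi$ of $\SLN(F)$ occurs in the restriction (to its intersection with $\SLN(F)$) of an archetype for some $\pi$ of $\GLN(F)$ lying above $\bar\pi$ -- and you correctly flag the depth-zero case as additionally delicate. But identifying the obstacle is not the same as surmounting it, and until that lemma is established the rest of the argument is conditional: without it the analogues of Theorem~\ref{typesaresimple} and Lemma~\ref{restrictiontokbar} are unavailable, so the ``forward'' and ``converse'' directions you invoke in the second paragraph are both hypotheses rather than results.

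The orbit-stabilizer count in your third paragraph is also asserted rather than carried out, and it hides genuine work. Even granting the reduction, you need to show that (a) the set of typical pairs for \emph{all} members of the $L$-packet $\Pi$ is a single $G$-orbit, (b) this orbit distributes evenly over the elements of $\Pi$, and (c) the stabilizer in $G$ of a typical pair for a fixed $\bar\pi$ has index exactly $e_{\bar\pi}$ in the stabilizer of $\bar\pi$. In the $\SL2$ case the paper does not use orbit-stabilizer at all: it instead argues directly that the compact open subgroup $\bar J$ underlying a $\bar G$-simple type embeds in exactly $e_{\bar\pi}$ $\bar G$-conjugacy classes of maximal compacts (a parahoric argument about lattice periods), and then invokes intertwining-implies-conjugacy to rule out duplicates. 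That argument generalizes more transparently than a counting-by-stabilizers argument, and for general $N$ you would also need to rule out the possibility of two $G$-conjugate but not $\bar G$-conjugate simple types inside the same maximal compact, which for $\SL2$ was handled by the same intertwining dichotomy. In short: your plan matches the paper's intended strategy, but it is a plan, not a proof, and the statement remains open.
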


\subsection{Acknowledgements}

This paper has resulted from my first year of PhD work at the University of East Anglia. It has benefited hugely from discussions with my advisor, Shaun Stevens, who is due thanks for his constant encouragement and generosity with ideas. I am also grateful to the EPSRC for providing my research studentship.

\subsection{Notation}

Throughout, $F$ will denote a nonarchimedean local field of odd residual characteristic $p$. We will denote by $\dedekind=\dedekind_F$ the ring of integers of $F$, and write $\mathfrak{p}=\mathfrak{p}_F$ for its maximal ideal. The residue field will be denoted by $\mathfrak{k}=\mathfrak{k}_F=\dedekind/\mathfrak{p}$, and we will write $q$ for the cardinality of $\mathfrak{k}$. We fix once and for all a choice $\varpi$ of uniformizer of $F$, i.e. an element such that $\varpi\dedekind=\mathfrak{p}$.\\

When working in generality, we will use $\Gg$ to denote an arbitrary $p$-adic group defined over $F$, by which we will mean the group $\Gg=\bm{\Gg}(F)$ of $F$-rational points of some connected reductive algebraic group $\bm{\Gg}$ defined over $F$. We will always denote by $G$ the general linear group $\GL2(F)$. We fix notation for a number of important subgroups of $G$. We will write $K=\GL2(\dedekind)$ for the standard maximal compact subgroup, $T$ for the split maximal torus of diagonal matrices, and $B$ for the standard Borel subgroup of upper triangular matrices. We also write $\bar{G}$ for the special linear group $\SL2(F)$ and, given a closed subgroup $H$ of $G$, we let $\bar{H}$ denote the subgroup $H\cap\bar{G}$ of $\bar{G}$. We also denote by $T^0$ the compact part of the torus, i.e. the group of diagonal matrices with entries in $\dedekind^\times$, and by $B^0=B\cap K$ the group of upper triangular matrices with entries in $\dedekind$. We will denote by $\eta$ the matrix $\begin{pmatrix}0 & 1\\ \varpi & 0\end{pmatrix}$, so that we may take $\bar{K}$ and $\eta\bar{K}\eta^{-1}$ as representatives of the two $\bar{G}$-conjugacy classes of maximal compact subgroups in $\bar{G}$.\\

We use the notation $^gx=gxg^{-1}$ for conjugation, similarly denoting by $^gX=\{{}^gx\ | \ x\in X\}$ the action of conjugation on a set. Given a representation $\sigma$ of a closed subgroup $\Hh$ of $\Gg$, we denote by $^g\sigma$ the representation of $^g\Hh$ given by $^g\sigma(ghg^{-1})=\sigma(h)$.\\

We write $\Rep(\Gg)$ for the category of smooth representations of $\Gg$, and $\Irr(\Gg)$ for the set of isomorphism classes of irreducible representations in $\Rep(\Gg)$. Given a closed subgroup $\Hh$ of $\Gg$, we write $\Ind_{\Hh}^{\Gg}\ \sigma$ for the smooth induction of $\sigma$ to $\Gg$, and $\cInd_{\Hh}^{\Gg}\ \sigma$ for the compact induction. We write $\Res_{\Hh}^{\Gg}\ \pi$ for the restriction of $\pi$ to $\Hh$, or simply $\pi\downharpoonright_{\Hh}$ for brevity when it is unnecessary to make clear the functor. Given subgroups $\Hh,\Hh'$ of $\Gg$ and representations $\lambda,\lambda'$ of $\Hh,\Hh'$, respectively, we write $\III_{\Gg}(\lambda,\lambda')=\{g\in\Gg\ | \ \Hom_{\Hh\cap{}^g\Hh'}(\lambda,{}^g\lambda')\neq 0\}$ for the intertwining of $\lambda$ with $\lambda'$.\\

Given a parabolic subgroup $\Pp$ of $\Gg$ with Levi decomposition $\Pp=\Mm\Nn$, we denote the \emph{normalized} parabolic induction of an irreducible representation $\zeta$ of $\Mm$ to $\Gg$ by $\Ind_{\Mm,\Pp}^{\Gg}\ \zeta$. By this, we mean $\Ind_{\Mm,\Pp}^{\Gg}\ \zeta=\Ind_\Pp^\Gg\ \tilde{\zeta}\otimes\delta_P^{-1/2}$, where $\tilde{\zeta}$ is the inflation of $\zeta$ to $\Pp$ and $\delta_P$ is the modular character of $\Pp$.\\

Finally, we denote by $\XXX(F)$ the group of complex characters $\chi:F^\times\rightarrow\C^\times$. We will be interested in two subgroups of this: the group $\XXXnr(F)$ of unramified characters in $\XXX(F)$ (i.e. those which are trivial on $\dedekind^\times$), and the group $\XXX_N(F)$ of order $N$ characters in $\XXX(F)$ (i.e. those $\chi\in\XXX(F)$ such that $\chi^N=\mathds{1}$).

\subsection{The Bernstein decomposition and types}

The Bernstein decomposition, which was first introduced in \cite{bernstein1984centre}, allows us to give a factorization of the category $\Rep(\Gg)$, which suggests a natural approach to its study. Given an irreducible representation $\pi$ of $\Gg$, there exists a unique $\Gg$-conjugacy class of smooth irreducible representations $\sigma$ of Levi subgroups $\Mm$ of $\Gg$ such that $\pi$ is isomorphic to an irreducible subrepresentation of $\Ind_{\Mm,\Pp}^\Gg\ \sigma$, for some parabolic subgroup $\Pp$ of $\Gg$ with Levi factor $\Mm$. We call this equivalence class the \emph{supercuspidal support} of $\pi$, and denote it by $\mathrm{scusp}(\pi)$. We put a further equivalence relation on the set of possible supercuspidal supports, by saying that $(\Mm,\sigma)$ is $\Gg$-\emph{inertially equivalent} to $(\Mm',\sigma')$ if there exists a $\chi\in\XXXnr(F)$ such that $(\Mm,\sigma)$ is $\Gg$-conjugate to $(\Mm',\sigma'\otimes\chi)$. The \emph{inertial support} of $\pi$ is then the inertial equivalence class of $\mathrm{scusp}(\pi)$. If $\mathrm{scusp}(\pi)=(\Mm,\sigma)$, then we write $[\Mm,\sigma]_{\Gg}$ for the inertial support of $\pi$.\\

With this in place, let $\BB(\Gg)$ denote the set of inertial equivalence classes of supercuspidal supports, and, for $\mathfrak{s}\in\BB(\Gg)$, let $\Rep^{\mathfrak{s}}(\Gg)$ denote the full subcategory of $\Rep(\Gg)$ consisting of representations such that all irreducible subquotients have inertial support $\mathfrak{s}$, and write $\Irr^{\mathfrak{s}}(\Gg)$ for the set of isomorphism classes of irreducible representations in $\Rep^{\mathfrak{s}}(\Gg)$. Bernstein then shows that
\[\Rep(\Gg)=\prod_{\mathfrak{s}\in\BB(\Gg)}\Rep^{\mathfrak{s}}(\Gg).
\]
More generally, given a subset $\SS$ of $\BB(\Gg)$, let $\Rep^\SS(\Gg)=\prod_{\mathfrak{s}\in\SS}\Rep^{\mathfrak{s}}(\Gg)$ and $\Irr^\SS(\Gg)=\bigcup_{\mathfrak{s}\in\SS}\Irr^{\mathfrak{s}}(\Gg)$. This allows us to define the notion of a type in generality:

\begin{definition}
Let $\SS\subset\BB(\Gg)$. Let $(J,\lambda)$ be a pair consisting of a compact open subgroup $J$ of $\Gg$ and a smooth irreducible representation $\lambda$ of $J$.
\begin{enumerate}[(i)]
\item We say that $(J,\lambda)$ is $\SS$-\emph{typical} if, for any smooth irreducible representation $\pi$ of $\Gg$, we have that $\Hom_J(\pi\downharpoonright_J,\lambda)\neq 0\Rightarrow\pi\in\Irr^{\SS}(\Gg)$.
\item We say that $(J,\lambda)$ is an $\SS$-\emph{type} if it is $\SS$-typical, and $\Hom_J(\pi\downharpoonright_J,\lambda)\neq 0$ for each $\pi\in\Irr^{\SS}(\Gg)$.
\end{enumerate}
In the case that $\SS=\{\mathfrak{s}\}$ is a singleton, we will simply speak of $\mathfrak{s}$-types rather than $\{\mathfrak{s}\}$-types.
\end{definition}

In the cases of interest to us, the Bernstein components of $\Rep(\Gg)$ admit particularly simple descriptions: if $\Rep^{\frak{s}}(G)$ contains a supercuspidal representation $\pi$, then $\Irr^{\frak{s}}(G)=\{\pi\otimes(\chi\circ\det)\ | \ \chi\in\XXXnr(F)\}$. The situation for $\bar{G}$ is even simpler: as $\bar{G}$ has no unramified characters, $\Irr^{\frak{s}}(\bar{G})$ is a singleton whenever it contains a supercuspidal representation.\\

We now introduce the slightly modified notion of an \emph{archetype}, which is more suited to studying the unicity of types in groups other than $\GLN(F)$.

\begin{definition}
Let $\SS\subset\BB(\Gg)$. An $\SS$-\emph{archetype} is a $\Gg$-conjugacy class of $\SS$-typical representations $(\mathscr{K},\tau)$ for $\mathscr{K}$ a maximal compact subgroup of $\Gg$. Given a representative $(\mathscr{K},\tau)$ of an archetype, we write $^{\Gg}(\mathscr{K},\tau)$ for the full conjugacy class.
\end{definition}

\begin{remark}
It may seem odd to define an archetype as a conjugacy class of \emph{typical} representations rather than as a conjugacy class of types. However, for us, the difference turns out to be unimportant: the unicity of types will allow us to see that typical representations of maximal compact subgroups are types in almost all cases (indeed, for all representations not contained in the restriction of the Steinberg representation of $G$). The reason for working with typical representations rather than types is that it allows us to include these ``Steinberg'' representations in the general picture, despite them admitting no type of the form $(\mathscr{K},\tau)$.
\end{remark}

There is one obvious way of constructing archetypes:

\begin{lemma}
Let $\pi$ be an irreducible representation of a $p$-adic group $\Gg$ of inertial support $\mathfrak{s}$. Let $(J,\lambda)$ be an $\mathfrak{s}$-type, and let $\mathscr{K}$ be a maximal compact subgroup of $\Gg$ containing $J$. Then the irreducible components of $\tau:=\cInd_J^{\mathscr{K}}\ \lambda$ are representatives of $\mathfrak{s}$-archetypes. Moreover, if $\tau$ is irreducible then it is an $\mathfrak{s}$-type.
\end{lemma}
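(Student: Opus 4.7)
The plan is essentially to unwind the definitions and apply Frobenius reciprocity, noting first that because $J$ is open in the compact group $\mathscr{K}$, the index $[\mathscr{K}:J]$ is finite, so $\tau = \cInd_J^{\mathscr{K}}\lambda = \Ind_J^{\mathscr{K}}\lambda$ is a finite-dimensional smooth representation of $\mathscr{K}$ and in particular decomposes as a direct sum of irreducible components.

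For the first claim, I would take an irreducible summand $\tau_i$ of $\tau$ and show it is $\mathfrak{s}$-typical. If $\pi'$ is any irreducible smooth representation of $\Gg$ with $\Hom_{\mathscr{K}}(\pi'\downharpoonright_{\mathscr{K}},\tau_i)\neq 0$, then composing with the inclusion $\tau_i\hookrightarrow\Ind_J^{\mathscr{K}}\lambda$ gives a nonzero element of $\Hom_{\mathscr{K}}(\pi'\downharpoonright_{\mathscr{K}},\Ind_J^{\mathscr{K}}\lambda)$. Frobenius reciprocity for the inclusion $J\subset\mathscr{K}$ then yields a nonzero element of $\Hom_J(\pi'\downharpoonright_J,\lambda)$, and $\mathfrak{s}$-typicality of $(J,\lambda)$ forces $\pi'\in\Irr^{\mathfrak{s}}(\Gg)$. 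Hence $(\mathscr{K},\tau_i)$ is $\mathfrak{s}$-typical, and its $\Gg$-conjugacy class is by definition an $\mathfrak{s}$-archetype.

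For the second claim, assume $\tau$ is already irreducible; by the previous paragraph $(\mathscr{K},\tau)$ is $\mathfrak{s}$-typical, so it remains to check that every $\pi\in\Irr^{\mathfrak{s}}(\Gg)$ contains $\tau$ upon restriction to $\mathscr{K}$. Since $(J,\lambda)$ is an $\mathfrak{s}$-type (not merely typical), we have $\Hom_J(\pi\downharpoonright_J,\lambda)\neq 0$ for every such $\pi$, and Frobenius reciprocity again converts this into $\Hom_{\mathscr{K}}(\pi\downharpoonright_{\mathscr{K}},\tau)\neq 0$, completing the proof. There is no real obstacle here: the whole statement is a direct translation between containment data at the level of $J$ and at the level of $\mathscr{K}$, the only thing to be a little careful with being that one uses the ``typical'' half of the definition of a type to deduce the first assertion, and the ``exhaustive'' half to deduce the second.
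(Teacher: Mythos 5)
Your proof is correct and takes essentially the same approach as the paper: Frobenius reciprocity for the inclusion $J\subset\mathscr{K}$ carries the typicality of $(J,\lambda)$ to each irreducible summand of $\tau$, and the same adjunction in the exhaustive direction gives the type property when $\tau$ is irreducible. The paper compresses the second step to ``transitivity of induction,'' but that is just a repackaging of the Frobenius reciprocity identification you spell out.
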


\begin{proof}
Using Frobenius reciprocity, it is clear that if an irreducible representation $\pi'$ of $\Gg$ contains $\tau$, then it must contain $\lambda$, hence the first claim. The second claim simply follows by the transitivity of induction.
\end{proof}

The question of the unicity of types is then whether there are any archetypes other than those induced from Bushnell--Kutzko types. For $G$, Henniart answers this in the appendix to \cite{breuil2002multiplicites}:

\begin{theorem}\label{unicitygl2}
Suppose $q\neq 2$. Let $\pi$ be an irreducible representation of $G$ of inertial support $\mathfrak{s}$. Let $^G(K,\tau)$ be an $\frak{s}$-archetype. Then there exists a Bushnell--Kutzko type $(J,\lambda)$ with $J\subset K$ such that $\tau\hookrightarrow\cInd_J^K\ \lambda$. Moreover, unless $\pi$ is a twist of the Steinberg representation $\St_G$, the representation $\cInd_J^K\ \lambda$ is irreducible and hence $(K,\tau)$ is an $\frak{s}$-type.
\end{theorem}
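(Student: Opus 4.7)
The plan is a case analysis on $\mathfrak{s}$, using the full Bushnell--Kutzko classification for $G=\GL2(F)$. The components fall into three families: supercuspidal components, the unramified principal-series component (containing the trivial and Steinberg representations together with their unramified twists), and the remaining tamely ramified principal-series components. In each case, the Bushnell--Kutzko type $(J_0,\lambda_0)$ is known explicitly and, up to $G$-conjugacy, one may arrange $J_0\subset K$. Fix an $\mathfrak{s}$-typical $(K,\tau)$ and any $\pi\in\Irr^{\mathfrak{s}}(G)$ containing $\tau$ on restriction; I aim to produce, inside $K$, a pair $G$-conjugate to $(J_0,\lambda_0)$ that embeds into $\tau$.

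The first and main step is to pin down the level of $\tau$, meaning the minimal $n\geq 0$ with $\tau$ trivial on $K_{n+1}:=1+\mathfrak{p}^{n+1}\Mat_2(\dedekind)$, and to identify the character which Clifford theory extracts from $\tau|_{K_n}$, using that $K_n/K_{n+1}\cong\Mat_2(\mathfrak{k})$ is abelian. Fixing an additive character of $F$, this character corresponds to a stratum $(\Mat_2(\dedekind),n,\beta)$ contained in $\tau$. The typicality of $\tau$ for $\mathfrak{s}$ must force this stratum to be fundamental and to be the Bushnell--Kutzko stratum associated to $\mathfrak{s}$: were it non-fundamental, the standard refinement process would exhibit a substratum of strictly smaller level, contradicting minimality; were it fundamental but of the wrong shape, Frobenius reciprocity would witness $\pi$ as belonging to a different Bernstein component.

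Once the stratum is matched to $\mathfrak{s}$, the desired pair $(J,\lambda)\subset K$ reads off from the Bushnell--Kutzko recipe, and $\tau\hookrightarrow\cInd_J^K\lambda$ follows by a Mackey analysis of $\cInd_{K_n}^K\psi$ (where $\psi$ is the character produced above): every constituent containing $\tau$ must be obtained by inducing from an extension of $\psi$ along its intertwining, and the admissible extensions are precisely those parametrised by Bushnell--Kutzko. Irreducibility of $\cInd_J^K\lambda$ then reduces to the computation of $\III_K(\lambda,\lambda)$, which equals $J$ outside the Iwahori case, where it has index two in the relevant normaliser — and this index two is exactly what produces the splitting of $\cInd_J^K\lambda$ into two typical pieces in the unramified principal-series component, accounting for the Steinberg exception.

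The hardest part is the stratum identification in positive depth: ruling out each non-Bushnell--Kutzko fundamental stratum requires careful bookkeeping of how characters of $K_n/K_{n+1}$ intertwine in $G$ and how unramified twists act on Bernstein components, since a single typical $\tau$ must be shown to witness only one inertial class. The hypothesis $q\neq 2$ enters here: it guarantees that the tame twists of any candidate supercuspidal produce enough orbits on the relevant character sets to separate distinct inertial classes at the finite level, so that typicality is strong enough to force the correct stratum.
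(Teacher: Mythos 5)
The paper does not prove this theorem: it is Henniart's result, imported by citation from the appendix to \cite{breuil2002multiplicites}, and the sentence immediately preceding the statement makes this explicit. There is therefore no internal argument to compare your proposal against; what you have written is a reconstruction of Henniart's proof, and it must be judged on that footing.

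As a high-level outline it is broadly faithful to Henniart's method: extract, via Clifford theory on the abelian quotient $K_n/K_{n+1}\cong\Mat_2(\mathfrak{k})$, a character on the top congruence level where $\tau$ is nontrivial; read this character as a stratum; and use typicality together with a Mackey/intertwining computation to pin the stratum down to the Bushnell--Kutzko one. But the central step conceals a genuine gap. You claim that if the extracted stratum were non-fundamental, the refinement process would produce a sub-stratum of strictly smaller level ``contradicting minimality.'' The refinement process, however, is a statement about representations of $G$: it produces a fundamental stratum for $\pi$ at a lower level $m$ over some (possibly different) hereditary order, not a reduction of the level $n$ of the $K$-representation $\tau$. \emph{A priori} the level of $\tau$ may strictly exceed the depth of $\pi$ --- for instance, any depth-$\tfrac{1}{2}$ supercuspidal restricted to $K$ contributes nontrivial characters on $K_1,K_2,\ldots$ --- and the bulk of Henniart's A.2--A.3 consists precisely in showing, via intertwining estimates, that a typical $\tau$ cannot live at the ``wrong'' level: if it did, it would also appear in the restriction to $K$ of a parabolic induction or of a supercuspidal in a different inertial class, contradicting typicality. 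Your sketch does not address this, and without it the stratum identification does not follow. The remark about $q\neq 2$ is likewise too vague to locate where the hypothesis does work; in Henniart it enters in specific depth-zero and low-level cases rather than as a blanket orbit-counting statement.
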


\subsection{Bushnell--Kutzko types}

We now describe the explicit construction, due to Bushnell and Kutzko, of types for the irreducible representations of $G$ and $\bar{G}$. In this section, we discuss the types for supercuspidal representations, which are the \emph{simple types} constructed in \cite{bushnell1993admissible}, \cite{bushnell1993sln} and \cite{bushnell1994sln}. The construction of these types is by a series of successively stronger approximations of a type, and is rather technical in nature. We omit as many details as possible; the full details for our case of $N=2$ may be found in the appendix of \cite{breuil2002multiplicites}, or in \cite{bushnell2006langlands}. The starting points for the construction are the \emph{hereditary} $\dedekind$-\emph{orders}. For our purposes, we may simply say that the $G$-conjugacy classes of hereditary orders in $\Mat_2(F)$ are represented by the maximal order $\MM=\Mat_2(\dedekind)$, and the Iwahori order $\II$, which consists of those matrices in $\MM$ which are upper-triangular modulo $\frak{p}$. The parahoric subgroups of $G$ are then the groups of units of these rings. Letting $U_\MM=\MM^\times$ and $U_\II=\II^\times$, we may take as representatives for the $\bar{G}$-conjugacy classes of parahoric subgroups of $\bar{G}$ the groups $\bar{U}_\MM=U_\MM\cap\bar{G}$, its conjugate $^\eta\bar{U}_\MM$, and $\bar{U}_\II=U_\II\cap\bar{G}$.\\

We also require the Jacobson radicals of these hereditary orders. The radical of $\MM$ is $\PP_\MM=\Mat_2(\frak{p})$, and the radical of $\II$ is the ideal $\PP_\II$ of matrices which are strictly upper-triangular modulo $\frak{p}$. Given a hereditary order $\AA$, we may then define a filtration of $U_\AA$ by compact open subgroups, by setting $U_\AA^n=1+\PP_\AA^n$, for $n\geq 1$. There is an integer $e_\AA$ called the $\dedekind$-\emph{lattice period} associated to each hereditary order; it is the positive integer $e_\AA$ such that $\PP_\AA^{e_\AA}=\varpi\AA$. The construction of the simple types $(J,\lambda)$ is then by \emph{simple strata}. Roughly speaking, any type $(J,\lambda)$ for a supercuspidal representation $\pi$ of $G$ is constructed via a triple $[\AA,n,\beta]$ consisting of a hereditary $\dedekind$-order $\AA$, the integer $n$ such that $n/e_\AA$ is the depth of $\pi$, and an element $\beta$ of $\PP_\AA^{-n}$ such that $E:=F[\beta]$ is a field. For our purposes, it suffices to know that $J=\dedekind_E^\times U_\AA^{\floor{\frac{n+1}{2}}}$. We will also briefly make use of certain filtration subgroups of $J$: for an integer $k\geq 1$, let $J^k=J\cap U_\AA^k$.\\

These constructions lead, for each supercuspidal representation $\pi$ of $G$, to an irreducible representation $\lambda$ of a compact open subgroup $J$ of $G$, such that $(J,\lambda)$ is a $[G,\pi]_G$-type and there exists a unique extension $\Lambda$ of $\lambda$ to the $G$-normalizer $\tilde{J}$ of $\lambda$ such that $\pi\simeq\cInd_{\tilde{J}}^G\ \Lambda$. Any $\frak{s}$-type arising from these constructions is a \emph{(maximal) G-simple type}. The other main fact that we will require is the ``intertwining implies conjugacy'' property (\cite{bushnell1993admissible}, Theorem 5.7.1), which says that, if we have two maximal simple types $(J,\lambda)$ and $(J',\lambda')$ such that $\III_G(\lambda,\lambda')\neq\emptyset$, then $(J,\lambda)$ and $(J',\lambda')$ must actually be $G$-conjugate.\\

In our case, the simple types in $\bar{G}$ are easily obtained from those in $G$. Let $\pi$ be a supercuspidal representation of $G$, so that $\pi\downharpoonright_{\bar{G}}$ splits into a finite sum of supercuspidal representations of $\bar{G}$. Choose a simple type $(J,\lambda)$ extending to $(\tilde{J},\Lambda)$ such that $\pi\simeq\cInd_{\tilde{J}}^G\ \Lambda$, so that we may perform a Mackey decomposition to obtain
\begin{align*}
\pi\downharpoonright_{\bar{G}}\simeq\bigoplus_{\bar{G}\backslash G/\tilde{J}}\cInd_{^g\bar{J}}^{\bar{G}}\ ^g\bar{\lambda},
\end{align*}
where $\bar{\lambda}=\lambda\downharpoonright_{\bar{J}}$. This is a finite length sum, and the summands will generally be reducible of finite length. However, in our case all ramification is tame and this is actually a decomposition into irreducibles, with one family of exceptions: for the unramified twists of the ``exceptional depth zero'' supercuspidal representation of $G$, which under local Langlands corresponds to the triple imprimitive representation of the Weil group, each of the above summands is reducible of length $2$. We then define the \emph{(maximal)} $\bar{G}$-\emph{simple types} to be the irreducible components of the representations $^g\bar{\lambda}$, for $(J,\lambda)$ running over the $G$-simple types. Given such a $\bar{G}$-simple type $(\bar{J},\mu)$, we have that $\III_{\bar{G}}(\mu)=\bar{J}$; thus they induce up to a supercuspidal representation of $\bar{G}$, and it is clear that this gives a construction of all of the supercuspidals of $\bar{G}$. Just as in the case of $G$, we have an intertwining implies conjugacy property: if two maximal $\bar{G}$-simple types $(\bar{J},\mu)$ and $(\bar{J}',\mu')$ are such that $\III_{\bar{G}}(\mu,\mu')\neq\emptyset$, then there exists a $g\in\bar{G}$ such that $(\bar{J}',\mu')\simeq({}^g\bar{J},{}^g\mu)$ (\cite{bushnell1993admissible}, Theorem 5.3 and Corollary 5.4).

\subsection{The local Langlands correspondence for supercuspidals}

Some of our results on supercuspidals will require a basic understanding of the relevant local Langlands correspondences, which we quickly recall here. Fix once and for all a choice $\bar{F}/F$ of separable algebraic closure. We have a natural projection $\Gal(\bar{F}/F)\twoheadrightarrow\Gal(\bar{\mathfrak{k}}/\mathfrak{k})$, constructed by viewing $\Gal(\bar{F}/F)$ as an inverse limit over finite Galois extensions. The kernel of this map map is the \emph{inertia group} of $F$, which we denote by $I_F$. Let $W_F$ denote the Weil group, which as an abstract group is given by the subgroup of $\Gal(\bar{F}/F)$ generated by $I_F$ and the Frobenius elements, and is then topologized so that $I_F$ is an open subgroup of $W_F$, on which the subspace topology coincides with its topology inherited from $\Gal(\bar{F}/F)$. While in general one requires the full Weil--Deligne group, we will only consider the Langlands correspondence for supercuspidal representations; thus we may simply work with the Weil group.\\

For a $p$-adic group $\Gg$, let $\Irr_{\mathrm{scusp}}(\Gg)$ denote the set of equivalence classes of supercuspidal representations of $\Gg$. Let $\Ll^0(\Gg)$ denote the set of irreducible $L$-parameters for $\Gg$, which is the same as the set of irreducible Frobenius-semisimple representations $W_F\rightarrow\GL2(\C)$, i.e. those irreducible representations under which some fixed Frobenius element of $W_F$ acts semisimply. Then the local Langlands correspondence for $G$ provides a unique natural bijection $\rec:\Irr_{\mathrm{scusp}}(G)\leftrightarrow\Ll^0(G)$, which preserves $L$- and $\varepsilon$-factors, as well as mapping supercuspidal representations to irreducible $L$-parameters, among a list of other properties.\\

From this, as shown in \cite{labesse1979sl2} and \cite{gelbart1982sln}, one may deduce a Langlands correspondence for the supercuspidal representations of $\bar{G}$, which suffices for our purposes. Let $\Ll^0(\bar{G})$ be the image of $\Ll^0(G)$ under the natural map $\Hom(W_F,\GL2(\C))\rightarrow\Hom(W_F,\mathbf{PGL}_2(\C))$. Then we define the local Langlands correspondence $\overline{\rec}$ on $\Irr_{\mathrm{scusp}}(\bar{G})$ by requiring that, for \emph{any} map $R$ which sends a supercuspidal representation $\pi$ of $G$ to one of the irreducible components of $\Res_{\bar{G}}^G\ \pi$, the diagram
\[\xymatrix{
\Irr_{\mathrm{scusp}}(G)\ar@{<->}[r]^-{\rec}\ar[d]_R & \Ll^0(G)\ar@{->>}[d]\\
\Irr_{\mathrm{scusp}}(\bar{G})\ar@{-->}[r]^-{\overline{\rec}} & \Ll^0(\bar{G})
}\]
commutes. The (supercuspidal) $L$-packets are then simply the finite fibres of the map $\overline{\rec}$, which are precisely the sets of irreducible components of the restrictions to $\bar{G}$ of supercuspidal representations of $G$.\\

One may use the local Langlands correspondence to give an alternative description of $G$-inertial equivalence classes of supercuspidal representations: two supercuspidal representations $\pi$ and $\pi'$ of $G$ are inertially equivalent if and only if $\rec(\pi)\downharpoonright_{I_F}\simeq\rec(\pi')\downharpoonright_{I_F'}$.

\section{The main unicity result}\label{unicity}

We now begin working towards the main results, beginning with a description of the relationship between archetypes in $G$ and those in $\bar{G}$.

\begin{lemma}\label{order2}
Let $\pi$ be a supercuspidal representation of $G$, let $\bar{\pi}$ be an irreducible component of $\pi\downharpoonright_{\bar{G}}$, and suppose that $\bar{\pi}$ admits an archetype ${}^{\bar{G}}(\bar{K},\bar{\tau})$. Let $\Psi$ be an irreducible subquotient of $\Ind_{\bar{K}}^K\ \bar{\tau}$ which is contained in $\pi\downharpoonright_K$, and let $\SS=\{[G,\pi\otimes(\chi\circ\det)]_G\ | \ \chi\in\XXX_2(F)\}$. Then $\Psi$ is $\SS$-typical.
\end{lemma}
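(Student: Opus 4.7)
The plan is to show that any irreducible representation $\pi'$ of $G$ with $\Hom_K(\Psi,\pi'\downharpoonright_K)\neq 0$ has inertial support in $\SS$. The strategy proceeds in three stages: descend to $\bar{K}$ to find a common $\bar{G}$-component of $\pi\downharpoonright_{\bar{G}}$ and $\pi'\downharpoonright_{\bar{G}}$; apply Clifford theory for $\bar{G}\triangleleft G$ to deduce that $\pi'$ is a character twist of $\pi$; then use central characters to force the twisting character to lie in $\XXX_2(F)\cdot\XXXnr(F)$.

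First I would analyse $\Psi$ on $\bar{K}$. Since $\bar{K}$ is normal in $K$, Mackey applied at each finite quotient $K/U$ (for $U$ open normal in $K$ contained in $\ker\bar{\tau}$) identifies $\Ind_{\bar{K}}^K\ \bar{\tau}\downharpoonright_{\bar{K}}$ with the algebraic direct sum $\bigoplus_{g\in K/\bar{K}}{}^g\bar{\tau}$. Because smooth $\bar{K}$-representations are semisimple, every irreducible $\bar{K}$-subquotient of $\Ind_{\bar{K}}^K\ \bar{\tau}$ is of the form ${}^g\bar{\tau}$ for some $g\in K$. Applied to the subquotient $\Psi$, this produces $g\in K$ with $\Psi\downharpoonright_{\bar{K}}\supset{}^g\bar{\tau}$, and therefore $\pi'\downharpoonright_{\bar{K}}\supset{}^g\bar{\tau}$. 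Let $\bar{\pi}'$ be the component of $\pi'\downharpoonright_{\bar{G}}$ whose restriction to $\bar{K}$ contains ${}^g\bar{\tau}$. Since typicality is conjugation-equivariant and $\bar{K}$ is stable under $K$, the pair $(\bar{K},{}^g\bar{\tau})$ is typical for ${}^g\bar{\pi}$; hence $\bar{\pi}'\simeq{}^g\bar{\pi}$. But ${}^g\pi\simeq\pi$ as $G$-representations, so ${}^g\bar{\pi}$ is also a component of $\pi\downharpoonright_{\bar{G}}$. Thus $\pi\downharpoonright_{\bar{G}}$ and $\pi'\downharpoonright_{\bar{G}}$ share an irreducible component.

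Next I would invoke standard Clifford theory for the normal subgroup $\bar{G}$ of $G$: a shared $\bar{G}$-component forces $\pi'\simeq\pi\otimes(\chi\circ\det)$ for some $\chi\in\XXX(F)$. To place this inertial class in $\SS$, I would compare central characters. By Schur's lemma, $\dedekind^\times I\subset K$ acts on the irreducible $\Psi$ through a single character, which must coincide with both $\omega_\pi\vert_{\dedekind^\times}$ and $\omega_{\pi'}\vert_{\dedekind^\times}$ since $\Psi\hookrightarrow\pi\downharpoonright_K$ and $\Psi\hookrightarrow\pi'\downharpoonright_K$. Combined with $\omega_{\pi'}=\omega_\pi\cdot\chi^2$ (from $\det(zI)=z^2$) this yields $\chi^2\vert_{\dedekind^\times}=1$, so $\chi^2\in\XXXnr(F)$. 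Decomposing $\chi=\chi_2\cdot\chi_{\mathrm{nr}}$ with $\chi_2\in\XXX_2(F)$ and $\chi_{\mathrm{nr}}\in\XXXnr(F)$, the inertial support of $\pi'$ equals $[G,\pi\otimes(\chi_2\circ\det)]_G\in\SS$, as required.

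The main subtlety lies in the first step: $\bar{K}$ is not open in $K$ (indeed $K/\bar{K}\simeq\dedekind^\times$ is an infinite profinite group), so $\Ind_{\bar{K}}^K\ \bar{\tau}$ is infinite-dimensional and its $K$-module structure is delicate. I avoid this by using only the $\bar{K}$-restriction, whose description is clean thanks to the normality of $\bar{K}$. The Clifford-theoretic passage $\pi'\simeq\pi\otimes(\chi\circ\det)$ and the central character comparison are then routine.
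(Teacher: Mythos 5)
Your argument is correct, and it shares the paper's overall plan (descend to $\bar{G}$, invoke the archetype hypothesis to identify a common $\bar{G}$-component, pass to a twist $\pi'\simeq\pi\otimes(\chi\circ\det)$, and constrain $\chi$ by central characters), but the descent step is handled differently. The paper avoids any Mackey analysis by applying Frobenius reciprocity directly: from $\Hom_K(\pi'\downharpoonright_K,\Psi)\neq 0$ and semisimplicity of smooth representations of the compact group $K$, one gets $\Hom_K(\Ind_{\bar{K}}^K\bar{\tau},\pi'\downharpoonright_K)=\Hom_{\bar{K}}(\bar{\tau},\pi'\downharpoonright_{\bar{K}})\neq 0$, so $\bar{\tau}$ itself occurs in $\pi'\downharpoonright_{\bar{K}}$ — no conjugate ${}^g\bar{\tau}$ arises and the conjugation bookkeeping you carry (and correctly dispose of via ${}^g\pi\simeq\pi$ for $g\in K$) is unnecessary. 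Your Mackey route reaches the same place, though one statement is loose: since $[K:\bar{K}]$ is infinite, $\Res_{\bar{K}}^K\Ind_{\bar{K}}^K\bar{\tau}$ is not the \emph{algebraic} direct sum $\bigoplus_{K/\bar{K}}{}^g\bar{\tau}$ but rather a space of locally constant sections sitting between that sum and the full product $\prod_g{}^g\bar{\tau}$; however, the embedding into the product together with semisimplicity of smooth $\bar{K}$-modules already forces every irreducible constituent to be some ${}^g\bar{\tau}$, so your conclusion stands. Your explicit factorisation $\chi=\chi_2\chi_{\mathrm{nr}}$ is a clean unpacking of the paper's terse "$\chi$ must be an unramified twist of a quadratic character". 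One thing the paper does that you omit: it begins by constructing a specific $\Psi$ (an irreducible quotient of $\cInd_{\dedekind^\times\bar{K}}^K\tilde{\tau}$, where $\tilde{\tau}$ extends $\bar{\tau}$ by $\omega_\pi^0$), since existence of $\Psi$ is needed for the downstream lemmas; the statement of this lemma takes $\Psi$ as given, so this is not a gap in your proof, but is worth noting.
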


\begin{proof}
We first note that such a $\Psi$ clearly exists: let $\omega_\pi$ denote the central character of $\pi$, and write $\omega_\pi^0$ for its restriction to $\dedekind^\times$. Let $\tilde{\tau}$ be the extension to $\dedekind^\times\bar{K}$ of $\bar{\tau}$ by $\omega_\pi^0$. Then, by Frobenius reciprocity, some irreducible quotient of $\cInd_{\dedekind^\times\bar{K}}^K\ \tilde{\tau}$ must be contained in $\pi$ upon restriction to $K$. From now on, $\Psi$ will always denote this representation.\\

Let $\pi'$ be an irreducible representation of $G$, and suppose that $\Hom_K(\pi'\downharpoonright_K,\Psi)\neq 0$. Then
\begin{align*}
0&\neq\Hom_K(\Ind_{\bar{K}}^K\ \bar{\tau},\Res_K^G\ \pi')\\
&=\Hom_{\bar{K}}(\bar{\tau},\Res_{\bar{K}}^{\bar{G}}\Res_{\bar{G}}^G\ \pi').
\end{align*}
Since $^{\bar{G}}(\bar{K},\bar{\tau})$ is an archetype for $\bar{\pi}$, we see that $\pi'$ must contain $\bar{\pi}$ upon restriction to $\bar{G}$, so that $\pi'$ is of inertial support $[G,\pi\otimes(\chi\circ\det)]_G$, for some $\chi\in\XXX(F)$. Comparing central characters, $\chi$ must be an unramified twist of a quadratic character, as required.
\end{proof}

\begin{lemma}\label{extensiontok}
The representation $\Psi$ constructed in Lemma \ref{order2} is a $[G,\pi]_G$-type.
\end{lemma}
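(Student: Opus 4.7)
My plan is to identify $\Psi$ with (essentially) the unique $[G,\pi]_G$-archetype in $K$. Let $(J,\lambda)$ be a maximal $G$-simple type for $\pi$, which by the intertwining-implies-conjugacy property of simple types can be chosen so that $J\subset K$ and the $\bar G$-simple type of $\bar\pi$ occurring as an irreducible component of $\bar\lambda=\lambda|_{\bar J}$ produces the given archetype $\bar\tau$. By Theorem \ref{unicitygl2} applied to $\pi$ (supercuspidal, hence not a twist of $\St_G$), the representation $\tau_\pi:=\cInd_J^K\lambda$ is irreducible and is a $[G,\pi]_G$-type. It will therefore suffice to show that $\Psi\simeq\tau_\pi$, or that $\Psi$ is isomorphic to a twist of $\tau_\pi$ that turns out to still be a $[G,\pi]_G$-type.

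I plan to construct, via Frobenius reciprocity, a non-zero $K$-morphism $\cInd_{\dedekind^\times\bar K}^K\tilde\tau\to\tau_\pi$. Mackey's formula gives that the $k=1$ summand of $\tau_\pi|_{\bar K}$ is $\cInd_{\bar J}^{\bar K}\bar\lambda$, which contains $\cInd_{\bar J}^{\bar K}\mu$ for $\mu$ the $\bar G$-simple type of $\bar\pi$ occurring in $\bar\lambda$; by the choice of $(J,\lambda)$ above, $\cInd_{\bar J}^{\bar K}\mu\simeq\bar\tau$, and extending by the central character $\omega_\pi^0$ yields $\tilde\tau\hookrightarrow\tau_\pi|_{\dedekind^\times\bar K}$. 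Since $\tau_\pi$ is irreducible, it is an irreducible quotient of $\cInd_{\dedekind^\times\bar K}^K\tilde\tau$. Clifford theory for the index-two normal subgroup $\dedekind^\times\bar K\triangleleft K$ shows that the irreducible quotients of this compactly induced representation are $\Psi$ and (possibly) $\Psi\otimes\eta_0$, where $\eta_0=(\chi_0\circ\det)|_K$ is the non-trivial character of $K/\dedekind^\times\bar K$ for some ramified quadratic character $\chi_0$ of $F^\times$. Hence either $\tau_\pi\simeq\Psi$ or $\tau_\pi\simeq\Psi\otimes\eta_0$.

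In the first case, the lemma follows immediately. In the second, $\Psi\simeq\tau_\pi\otimes\eta_0=\cInd_J^K(\lambda\otimes\eta_0|_J)$, which by Theorem \ref{unicitygl2} is the unique Bushnell--Kutzko archetype for the inertial class $[G,\pi\otimes\chi_0\circ\det]_G$, and in particular a type for this class. Since $\Psi\subset\pi|_K$ by construction in Lemma \ref{order2}, the typicality of $\tau_\pi\otimes\eta_0$ forces $\pi\in\Irr^{[G,\pi\otimes\chi_0\circ\det]_G}(G)$; hence $[G,\pi\otimes\chi_0\circ\det]_G=[G,\pi]_G$, and $\tau_\pi\otimes\eta_0$ is itself a $[G,\pi]_G$-type. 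In either case $\Psi$ is a $[G,\pi]_G$-type, as required. The main obstacle is the identification $\cInd_{\bar J}^{\bar K}\mu\simeq\bar\tau$ used in the Mackey computation: this requires pinning down the $G$-conjugate of $(J,\lambda)$ so that the correct $\bar G$-simple type arises in $\bar\lambda$, and relies on the explicit structure of $\bar G$-simple types recorded earlier together with additional care in the depth-zero case where the parahoric subgroups coincide with maximal compacts.
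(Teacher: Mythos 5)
Your overall strategy — identify $\Psi$ directly with the Bushnell--Kutzko archetype $\tau_\pi = \cInd_J^K\lambda$ (possibly twisted by a ramified quadratic character) by producing a non-zero map $\cInd_{\dedekind^\times\bar K}^K\tilde\tau\to\tau_\pi$ and invoking Clifford theory for the index-two subgroup $\dedekind^\times\bar K\triangleleft K$ — is cleaner in spirit than the paper's argument, and the Clifford step, Frobenius reciprocity, and the dichotomy at the end are all handled correctly. However, there is a genuine gap at the crucial step, and it is more serious than you suggest when you call it an ``obstacle.''

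To get $\Hom_K(\cInd_{\dedekind^\times\bar K}^K\tilde\tau,\tau_\pi)\neq 0$ you need $\bar\tau\hookrightarrow\tau_\pi|_{\bar K}$. You justify this by asserting that, after choosing the $G$-conjugate of $(J,\lambda)$ suitably, the representation $\cInd_{\bar J}^{\bar K}\mu$ occurring in the Mackey decomposition of $\tau_\pi|_{\bar K}$ is isomorphic to $\bar\tau$. But at this stage all that is known about $\bar\tau$ is that it is \emph{some} $[\bar G,\bar\pi]_{\bar G}$-typical irreducible representation of $\bar K$: we have no information whatsoever relating it to any $\bar G$-simple type. The statement that $\bar\tau\simeq\cInd_{\bar J}^{\bar K}\mu$ for a simple type $(\bar J,\mu)$ is precisely Theorem \ref{typesaresimple}(i), whose proof depends on the very Lemma \ref{extensiontok} you are trying to establish. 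This is not a matter of ``pinning down the right conjugate'' — with any choice of conjugate, all you know is that $\bar\tau$ occurs in $\bar\pi|_{\bar K}\subset\pi|_{\bar K}=\bigoplus_i\sigma_i|_{\bar K}$, where the $\sigma_i$ run over all $K$-irreducible components of $\pi|_K$, and there is nothing forcing $\bar\tau$ to land in the summand $\sigma_1|_{\bar K}=\tau_\pi|_{\bar K}$ rather than in the restriction of one of the many atypical components $\sigma_i$ ($i\geq 2$). In short, the argument is circular. The paper sidesteps this by never assuming any structure for $\bar\tau$: following Henniart's analysis, it classifies where $\Psi$ could lie inside $\pi|_K$ if it were \emph{not} the BK archetype (either in a parabolically-induced representation, ruled out by Lemma \ref{order2}, or in another supercuspidal $\pi_\mu$ built from a level-shifted character on the relevant quadratic extension, ruled out by comparing the level of $\mu$ with that of $\chi\circ\det$) and derives a contradiction in each case. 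If you want to rescue your approach you would first need an independent argument that $\bar\tau$ occurs in the restriction to $\bar K$ of the \emph{typical} component of $\pi|_K$ and not merely of some component, and this is where the real content of the lemma lies.
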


\begin{proof}
By Lemma \ref{order2}, it remains only to rule out the possibility that $\Psi$ is contained in a representation of inertial support $[G,\pi\otimes(\chi\circ\det)]_G$, for some non-trivial $\chi\in\XXX_2(F)$. Indeed, this would show that $\Psi$ is $[G,\pi]_G$-typical, and the unicity of types for $G$ would immediately imply that $\Psi$ represents a $[G,\pi]_G$-archetype. We now argue by cases.\\

As $\pi$ is a supercuspidal representation, we may write $\pi\simeq\cInd_{\tilde{J}}^G\ \Lambda$, with $(\tilde{J},\Lambda)$ extending a maximal simple type $(J,\lambda)$ contained in $\pi$. Suppose for contradiction that $\Psi$ is not a type. As noted by Henniart in the appendix to \cite{breuil2002multiplicites}, paragraphs A.2.4 -- A.2.7, A.3.6 -- A.3.7 and A.3.9 -- A.3.11, every irreducible component of $\pi\downharpoonright_K$ other than $\tau$ appears in the restriction to $K$ of either a parabolically induced representation, or some other supercuspidal $\pi_\mu$, in a different inertial equivalence class to that of $\pi$, which we may now describe explicitly. There are three further subcases which we treat separately.\\

Suppose first that $\Psi$ is contained in some parabolically induced representation. We may therefore find a character $\zeta$ of $T$ such that $\Psi$ is isomorphic to some irreducible component of $\Res_K^G\Ind_{T,B}^G\ \zeta$. As $\Ind_{\bar{K}}^K\ \bar{\tau}$ projects onto $\Psi$, we therefore have
\begin{align*}
0&\neq\Hom_K(\cInd_{\bar{K}}^K\ \bar{\tau},\Res_K^G\Ind_{T,B}^G\ \zeta)\\
&=\Hom_G(\cInd_{\bar{K}}^{\bar{G}}\ \bar{\tau},\Res_{\bar{G}}^G\Ind_{T,B}^G\ \zeta)\\
&=\bigoplus_{i=1}^n\Hom_G(\bar{\pi},\Ind_{\bar{T},\bar{B}}^{\bar{G}}\Res_{\bar{T}}^T\ \zeta).
\end{align*}
Here, $n$ is the integer such that $\cInd_{\bar{K}}^{\bar{G}}\ \bar{\tau}\simeq\bar{\pi}^{\oplus n}$, which exists by Proposition 5.2 of \cite{bushnell1998structure}, and the final equality follows from a Mackey decomposition with the summation involved being trivial as $B\bar{G}=G$. Hence $\bar{\pi}$ is contained in some parabolically induced representation, which provides a contradiction by Lemma \ref{order2}.\\

Now suppose $\Psi$ does not appear as an irreducible component of the restriction to $K$ of any parabolically induced representation, and suppose furthermore that $\pi$ is of integral depth $n$. In this case, we may construct a new supercuspidal representation containing every irreducible component of $\pi\downharpoonright_K$ other than the archetype $\tau$. Let $E/F$ be the unique unramified quadratic extension of $F$, and choose an embedding $\dedekind_E^\times\subset K$. Let $\mu$ be any level $1$ character of $E^\times$ trivial on $F^\times$, and let $(J,\lambda)$ be a simple type for $\pi$. Then the pair $(J,\lambda\otimes\mu)$ is again a maximal simple type contained in some supercuspidal representation $\pi_\mu$ lying in a different inertial equivalence class to that of $\pi$, and any irreducible component of $\pi\downharpoonright_K$ other than $\tau$ must be contained in $\pi_\mu$ upon restriction; in particular, we must have $\sigma\hookrightarrow\pi_\mu\downharpoonright_K$. But then $\pi_\mu$ must be isomorphic to an unramified twist of $\pi\otimes(\chi\circ\det)$, for some (non-trivial by assumption) $\chi\in\XXX_2(F)$, which is to say that their archetypes must coincide. The archetype for $\pi_\mu$ is $\cInd_J^K\ \lambda\otimes\mu$, and the archetype for $\pi\otimes(\chi\circ\det)$ is $(\cInd_J^K\ \lambda)\otimes(\chi\circ\det)$. If these two representations are isomorphic, then we must have $\lambda\otimes\mu\simeq\lambda\otimes(\chi\circ\det)$, as $\III_K(\lambda\otimes\mu,\lambda\otimes(\chi\circ\det))\neq\emptyset$, and if $g$ intertwines $\lambda\otimes\mu$ with $\lambda\otimes(\chi\circ\det)$, then $g$ intertwines $\lambda\downharpoonright_{J^1}$ with itself, and so $g\in J$. As $\lambda\otimes\mu\simeq\lambda\otimes(\chi\circ\det)$, we may use Schur's lemma to obtain $0\neq\Hom_J(\lambda\otimes\mu,\lambda\otimes(\chi\circ\det))\subseteq\End_{J^1}(\lambda\downharpoonright_{J^1})=\C$, and hence $\Hom_J(\lambda\otimes\mu,\lambda\otimes(\chi\circ\det))$ contains the identity map, so that we must have $\mu=\chi\circ\det$ on $\dedekind_E^\times$. However, there are only two quadratic characters $\chi$ of $F^\times$ while there are $q+1\geq 4$ such characters $\mu$. Choosing $\mu$ non-quadratic, we obtain a contradiction.\\

Finally, consider the case where $\pi$ is of half-integral depth and $\Psi$ is not contained in any parabolically induced representation, we argue essentially as before. We assume further that $\pi$ is of depth at least $\frac{3}{2}$; for $\pi$ of depth $\frac{1}{2}$ Henniart shows that this case never arises. Let $E/F$ be the ramified quadratic extension associated to the simple type for $\pi$, and choose an embedding $\dedekind_E^\times\subset U_\II\subset K$. For $\mu$, we take a level $2$ character of $E^\times$ trivial on $\dedekind_F^\times$, and construct $\pi_\mu$ as before. Letting $(J,\lambda)$ be a simple type for $\pi$, so that $\lambda$ is one-dimensional, the pair $(J,\lambda\otimes\mu)$ is a simple type for some supercuspidal $\pi_\mu$ in a different inertial equivalence class to that of $\pi$, and $\Psi$ must appear in the restriction to $K$ of $\pi_\mu$. Then, up to an unramified twist, $\pi_\mu\simeq\pi\otimes(\chi\circ\det)$ for $\chi$ a nontrivial quadratic character of $F^\times$, and so the archetypes $\cInd_J^K\ \lambda\otimes\mu$ and $(\cInd_J^K\ \lambda)\otimes(\chi\circ\det)$ coincide. Then $\III_K(\lambda\otimes\mu,\lambda\otimes(\chi\circ\det))\neq\emptyset$, and if $g\in\III_K(\lambda\otimes\mu,\lambda\otimes(\chi\circ\det))$, then $g\in\III_K(\lambda\downharpoonright_{J^2},\lambda\downharpoonright_{J^2})=J$, as $\pi$ is of depth at least $\frac{3}{2}$, and $\lambda$ is one-dimensional so that $\lambda\downharpoonright_{J^2}$ is a simple character. It follows that we must have $\lambda\otimes\mu\simeq\lambda\otimes(\chi\circ\det)$, and so $\mu\simeq\chi\circ\det$. But then $\mu$ is of level $2$ while $\chi\circ\det$ is tame and hence of level at most $1$, giving a contradiction and completing the proof.
\end{proof}

\begin{lemma}\label{lemma}
Let $\pi$ be a supercuspidal representation of $G$, and let $^G(K,\tau)$ be an archetype for $\pi$. Then every irreducible component of $\tau\downharpoonright_{\bar{K}}$ is induced from a Bushnell--Kutzko type for $\bar{G}$.
\end{lemma}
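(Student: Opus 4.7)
The plan is to combine Henniart's unicity theorem for $G$ with a Mackey decomposition, and then recognize the resulting pieces as $\bar{G}$-simple types by direct appeal to their definition.

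Since $\pi$ is supercuspidal (in particular not a twist of the Steinberg representation), Theorem \ref{unicitygl2} yields $\tau \simeq \cInd_J^K\ \lambda$ for some maximal $G$-simple type $(J,\lambda)$ contained in $\pi$, with $J \subset K$. I then restrict to $\bar{K}$ by Mackey. Because $\bar{G}$ is normal in $G$ (as the kernel of $\det$), for every $g \in K$ one has $\bar{K}\cap {}^g J = {}^g\bar{J}$, and hence
$$\tau\downharpoonright_{\bar{K}} \simeq \bigoplus_{g \in \bar{K}\backslash K/J} \cInd_{{}^g\bar{J}}^{\bar{K}}\ {}^g\bar{\lambda},$$
where $\bar{\lambda} = \lambda\downharpoonright_{\bar{J}}$.

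By the construction of $\bar{G}$-simple types recalled in the excerpt, the irreducible components of the representations ${}^g\bar{\lambda}$ are by definition the $\bar{G}$-simple types (in the generic case these restrictions are already irreducible; in the exceptional depth-zero case they have length two, but each constituent is still a $\bar{G}$-simple type). Decomposing each summand as ${}^g\bar{\lambda} \simeq \bigoplus_i \mu_i$ into irreducibles, any irreducible constituent of $\tau\downharpoonright_{\bar{K}}$ must sit inside $\cInd_{{}^g\bar{J}}^{\bar{K}}\ \mu_i$ for some pair $({}^g\bar{J}, \mu_i)$ that is a Bushnell--Kutzko type for $\bar{G}$, which is precisely the required statement.

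I do not anticipate any real obstacle here, since the argument is essentially formal once the unicity theorem for $G$ is available. The only point demanding care is to observe that the coset representatives $g$ in the Mackey sum can be taken inside $K \subset G$, so that conjugation preserves $\bar{G}$ and the resulting $\mu_i$ really do fall under the excerpt's definition of $\bar{G}$-simple types; this follows at once from the normality of $\bar{G}$.
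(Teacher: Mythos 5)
Your proof is correct and follows essentially the same route as the paper: invoke Theorem~\ref{unicitygl2} to write $\tau \simeq \cInd_J^K\lambda$, perform the Mackey decomposition of the restriction to $\bar{K}$, and observe that the irreducible constituents of the ${}^g\bar{\lambda}$ are by definition $\bar{G}$-simple types. Your added remarks about normality of $\bar{G}$ ensuring $\bar{K}\cap{}^gJ={}^g\bar{J}$ and the exceptional depth-zero case are sound and make explicit points the paper leaves implicit.
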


\begin{proof}
By Theorem \ref{unicitygl2}, the representation $\tau$ is of the form $\tau=\cInd_J^K\ \lambda$, where $(J,\lambda)$ is a maximal simple type contained in $\pi$. Then we may perform a Mackey decomposition to obtain
\begin{align*}
\Res_{\bar{K}}^K\cInd_J^K\ \lambda=\bigoplus_{J\bar{K}\backslash K}\cInd_{^g\bar{J}}^{\bar{K}}\ ^g\bar{\lambda},
\end{align*}
where $\bar{\lambda}=\lambda\downharpoonright_{\bar{J}}$. The irreducible components of this representation are all of the required form.
\end{proof}

\begin{theorem}\label{typesaresimple}
Let $\bar{\pi}$ be a supercuspidal representation of $\bar{G}$.
\begin{enumerate}
\item If $^{\bar{G}}(\mathscr{K},\bar{\tau})$ is an archetype for $\bar{\pi}$, then there exists a simple type $(\bar{J},\mu)$ with $\bar{J}\subset\mathscr{K}$ such that $\bar{\tau}\simeq\cInd_{\bar{J}}^{\mathscr{K}}\ \mu$.
\item If $(\bar{J},\mu)$ is a simple type contained in $\bar{\pi}$ and $\mathscr{K}$ is a maximal compact subgroup of $\bar{G}$ which contains $\bar{J}$, then the representation $\cInd_{\bar{J}}^{\mathscr{K}}\ \mu$ is the unique $[\bar{G},\bar{\pi}]_{\bar{G}}$-typical representation of $\mathscr{K}$.
\end{enumerate}
\end{theorem}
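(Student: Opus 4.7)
The plan is to deduce both parts from Lemma~\ref{extensiontok} together with Henniart's unicity result for $G$ (Theorem~\ref{unicitygl2}). For Part 1, I would first reduce to $\mathscr{K}=\bar{K}$ by $\eta$-conjugation, which interchanges the two $\bar{G}$-conjugacy classes of maximal compact subgroups and permutes the components of $\pi\downharpoonright_{\bar{G}}$. Fixing a supercuspidal $\pi$ of $G$ with $\bar{\pi}\hookrightarrow\pi\downharpoonright_{\bar{G}}$, I apply Lemma~\ref{extensiontok} to produce an irreducible subrepresentation $\Psi\hookrightarrow\pi\downharpoonright_K$ which is a $[G,\pi]_G$-type on $K$ and satisfies $\bar{\tau}\hookrightarrow\Psi\downharpoonright_{\bar{K}}$. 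Henniart's theorem then identifies $\Psi\cong\cInd_J^K\lambda$ for a maximal simple type $(J,\lambda)\subset\pi$. Decomposing $\Psi\downharpoonright_{\bar{K}}$ via the Mackey formula of Lemma~\ref{lemma}, and further decomposing $\bar{\lambda}=\bigoplus_i\mu_i$ into irreducible $\bar{G}$-simple types, each summand $\cInd_{{}^g\bar{J}}^{\bar{K}}\,{}^g\mu_i$ is irreducible by Mackey's criterion, since intertwining-implies-conjugacy for $\bar{G}$-simple types forces $\III_{\bar{K}}({}^g\mu_i)\subseteq\III_{\bar{G}}({}^g\mu_i)={}^g\bar{J}$. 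Hence $\bar{\tau}$ coincides with one such summand; that the corresponding simple type $({}^g\bar{J},{}^g\mu_i)$ actually belongs to $\bar{\pi}$ (rather than to another component of $\pi\downharpoonright_{\bar{G}}$) follows by one more Mackey computation, since the supercuspidal $\cInd_{{}^g\bar{J}}^{\bar{G}}\,{}^g\mu_i$ then contains $\bar{\tau}$ on restriction to $\bar{K}$, and typicality of $\bar{\tau}$ for $\bar{\pi}$ forces it to equal $\bar{\pi}$.

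For Part 2, irreducibility of $\cInd_{\bar{J}}^{\mathscr{K}}\mu$ follows from the same Mackey criterion, and typicality for $\bar{\pi}$ is a routine consequence of Frobenius reciprocity and transitivity of compact induction, using that $(\bar{J},\mu)$ is already a $[\bar{G},\bar{\pi}]_{\bar{G}}$-type. For uniqueness, given a second typical representation $\bar{\tau}'$ of $\mathscr{K}$ for $\bar{\pi}$, Part 1 produces a simple type $(\bar{J}',\mu')$ for $\bar{\pi}$ with $\bar{\tau}'\cong\cInd_{\bar{J}'}^{\mathscr{K}}\mu'$ and $\bar{J}'\subset\mathscr{K}$. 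Intertwining-implies-conjugacy in $\bar{G}$, applied to two simple types for the same $\bar{\pi}$, yields $g\in\bar{G}$ with $(\bar{J}',\mu')=({}^g\bar{J},{}^g\mu)$, and it suffices to show this $g$ may be replaced by an element of $\mathscr{K}$; equivalently, that the trivial double coset is the only one contributing a summand typical for $\bar{\pi}$ in the Mackey decomposition $\bar{\pi}\downharpoonright_{\mathscr{K}}=\bigoplus_{g\in\mathscr{K}\backslash\bar{G}/\bar{J}}\cInd_{\mathscr{K}\cap{}^g\bar{J}}^{\mathscr{K}}{}^g\mu$.

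I expect this last step to be the main obstacle. For $\bar{\pi}$ of integral depth it should be immediate: $\bar{J}$ contains the anisotropic maximal compact torus $\dedekind_E^\times$ for $E/F$ the unramified quadratic extension, which fixes a unique vertex of the Bruhat--Tits building of $\bar{G}$, and that vertex corresponds to $\mathscr{K}$; hence ${}^g\bar{J}\subset\mathscr{K}$ forces ${}^g\mathscr{K}=\mathscr{K}$ and thus $g\in N_{\bar{G}}(\mathscr{K})=\mathscr{K}$. For $\bar{\pi}$ of half-integral depth $\bar{J}$ lies inside an Iwahori contained in both $\bar{G}$-classes of maximal parahoric, so this geometric shortcut fails. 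Here I would again reduce to $\mathscr{K}=\bar{K}$ by $\eta$-conjugation and apply Lemma~\ref{extensiontok} to both $\cInd_{\bar{J}}^{\bar{K}}\mu$ and $\bar{\tau}'$ for a common choice of $\pi$; Henniart's unicity in $G$ forces the two resulting $K$-typical representations to agree as $\cInd_J^K\lambda$, and then a careful identification of which Mackey summands of $\cInd_J^K\lambda\downharpoonright_{\bar{K}}$ are typical for $\bar{\pi}$ -- ruling out summands coming from the other simple types $({}^g\bar{J},{}^g\mu_i)$ in $\bar{\lambda}$ by comparing their inertial supports in $\bar{G}$ -- pins down $\bar{\tau}\cong\bar{\tau}'$.
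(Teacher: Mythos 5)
Your proof of part (1) is essentially the paper's: reduce to $\mathscr{K}=\bar{K}$, lift to $G$ via Lemma~\ref{extensiontok}, invoke Henniart's unicity (Theorem~\ref{unicitygl2}) to identify the resulting $K$-typical representation as $\cInd_J^K\lambda$, and decompose via Lemma~\ref{lemma}. Your added observations --- that each Mackey summand $\cInd_{{}^g\bar{J}}^{\bar{K}}{}^g\mu_i$ is irreducible because $\III_{\bar{G}}({}^g\mu_i)={}^g\bar{J}$, and that the resulting simple type must be one contained in $\bar{\pi}$ itself rather than another member of the $L$-packet --- are correct and make explicit two steps the paper leaves implicit.

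For part (2) your overall structure agrees with the paper's: use part (1) to reduce to comparing two induced simple types, apply intertwining-implies-conjugacy for $\bar{G}$-simple types to get $(\bar{J}',\mu')=({}^g\bar{J},{}^g\mu)$ for some $g\in\bar{G}$, and then argue that $g$ may be taken in $\mathscr{K}$. The place where you diverge is your claim that the ``geometric shortcut fails'' in half-integral depth; this is mistaken, and leads you into an unnecessary and incomplete detour. In the ramified case $\bar{J}$ still contains the norm-one group $\ker\NNN_{E/F}$ of the \emph{ramified} quadratic extension $E$; its fixed-point set in the Bruhat--Tits tree is a single closed edge, so $\bar{J}$ (hence $\bar{J}'$) lies in exactly one maximal compact subgroup from each of the two $\bar{G}$-conjugacy classes, namely the two endpoints of that edge. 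Since $\bar{J}'\subset\bar{K}$ and $\bar{J}'={}^g\bar{J}\subset{}^g\bar{K}$ with ${}^g\bar{K}$ in the same conjugacy class as $\bar{K}$, this forces ${}^g\bar{K}=\bar{K}$ and so $g\in N_{\bar{G}}(\bar{K})=\bar{K}$, in both the integral and half-integral cases. This is exactly the assertion the paper uses without further comment (``$\bar{J}'$ is contained in at most one maximal compact subgroup in each $\bar{G}$-conjugacy class''). By contrast, the route you sketch through Lemma~\ref{extensiontok} and Henniart's unicity still leaves open exactly the hard point --- deciding which of the several $K$-conjugate Mackey summands of $\cInd_J^K\lambda\downharpoonright_{\bar{K}}$ is typical for $\bar{\pi}$ as opposed to its siblings in the $L$-packet --- and the ``careful identification'' you invoke there is precisely the content of Lemma~\ref{restrictiontokbar} and Proposition~\ref{lpackets}, proved later and not available for use here. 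The paper's building argument sidesteps all of this and applies uniformly.
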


\begin{proof}
For \emph{(i)} we may reduce to the case $\mathscr{K}=\bar{K}$. By Lemma \ref{extensiontok}, $\bar{\tau}$ is an irreducible component of the restriction to $\bar{K}$ of the unique archetype $(K,\tau)$ for some irreducible representation $\pi$ of $G$ containing $\bar{\pi}$ upon restriction to $\bar{G}$. As there exists a simple type $(J,\lambda)$ such that $\tau=\cInd_J^K\ \lambda$, the result follows immediately from Lemma \ref{lemma}.\\

To see \emph{(ii)}, it remains to check that, given two distinct simple types $(\bar{J},\mu)$ and $(\bar{J}',\mu')$ contained in $\bar{\pi}$ which are, moreover, contained in the same conjugacy class of maximal compact subgroups, these simple types provide the same archetypes through induction. Thus, we may as well assume that $\bar{J},\bar{J}'\subset\bar{K}$. As $(\bar{J},\mu)$ and $(\bar{J}',\mu')$ are $\frak{s}$-types, $\pi$ will appear as a subquotient of the induced representations $\cInd_{\bar{J}}^{\bar{G}}\ \mu$ and $\cInd_{\bar{J}'}^{\bar{G}}\ \mu'$; hence we will have
\begin{align*}
0&\neq\Hom_{\bar{G}}(\cInd_{\bar{J}}^{\bar{G}}\ \mu,\cInd_{\bar{J}'}^{\bar{G}}\ \mu')\\
&=\Hom_{\bar{J}}(\mu,\Res_{\bar{J}}^{\bar{G}}\cInd_{\bar{J}'}^{\bar{G}}\ \mu')\\
&=\bigoplus_{\bar{J}'\backslash\bar{G}/\bar{J}}\Hom_{\bar{J}}(\mu,\cInd_{^g\bar{J}\cap\bar{J}'}^{\bar{J}}\Res_{^g\bar{J}\cap\bar{J}'}^{^g\bar{J}}\ ^g\mu')\\
&=\bigoplus_{\bar{J}'\backslash\bar{G}/\bar{J}}\Hom_{^g\bar{J}'\cap\bar{J}}(\Res_{^g\bar{J}'\cap\bar{J}}^{\bar{J}}\ \mu,\Res_{^g\bar{J}'\cap\bar{J}}^{^g\bar{J}'}\ ^g\mu'),
\end{align*}
and so $\III_{\bar{G}}(\mu,\mu')\neq\emptyset$. As $\pi$ is supercuspidal then $(\bar{J},\mu)$ and $(\bar{J}',\mu')$ will be simple types, and so by the intertwining implies conjugacy property there will exist a $g\in\bar{G}$ such that $^g(\cInd_{\bar{J}}^{\bar{K}}\ \mu)\simeq\cInd_{\bar{J}'}^{^g\bar{K}}\ \mu'$. As $\bar{J}'$ is contained in at most one maximal compact subgroup in each $\bar{G}$-conjugacy class, we must actually have $^g\bar{K}'=\bar{K}$, and so $(\bar{J},\mu)$ and $(\bar{J}',\mu')$ induce to the same archetype.
\end{proof}

\section{An explicit description of supercuspidal archetypes and a result on $L$-packets}\label{supercuspidals}

Having completed the proof of Theorem \ref{typesaresimple}, we now provide a more explicit description of the theory of archetypes for supercuspidal representations. Given a supercuspidal representation $\bar{\pi}$ of $\bar{G}$, we define the \emph{ramification degree} $e_{\bar{\pi}}$ of $\bar{\pi}$ to be $1$ if $\bar{\pi}$ is of integral depth, or $2$ if $\bar{\pi}$ is of half-integral depth. Then we obtain the following corollary to Theorem \ref{typesaresimple}:

\begin{corollary}
Let $\bar{\pi}$ be a supercuspidal representation of $\bar{G}$. Then the number of $[\bar{G},\bar{\pi}]_{\bar{G}}$-archetypes is precisely $e_{\bar{\pi}}$.
\end{corollary}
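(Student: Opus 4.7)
The plan is to apply Theorem~\ref{typesaresimple} in tandem with the intertwining-implies-conjugacy property for $\bar{G}$-simple types (stated at the end of the Bushnell--Kutzko types subsection) to reduce the count to a geometric question on the Bruhat--Tits tree of $\bar{G}$. By part~(i), each archetype has the form $^{\bar{G}}(\mathscr{K}, \cInd_{\bar{J}}^{\mathscr{K}}\,\mu)$ for some simple type $(\bar{J}, \mu)$ contained in $\bar{\pi}$, and by part~(ii) the archetype depends only on the $\bar{G}$-conjugacy class of $\mathscr{K}$. Any two simple types for $\bar{\pi}$ intertwine through $\bar{\pi}$ (exactly as in the proof of Theorem~\ref{typesaresimple}(ii)), hence are $\bar{G}$-conjugate, so the number of archetypes equals the number of $\bar{G}$-conjugacy classes of maximal compact subgroups containing a fixed representative $\bar{J}$ --- equivalently, the number of $\bar{G}$-orbits on vertices of the tree fixed by $\bar{J}$. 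As there are only two vertex orbits in total (represented by the vertex $v_0$ fixed by $\bar{K}$ and the adjacent $v_1$ fixed by $\eta\bar{K}\eta^{-1}$), the answer is $1$ or $2$.

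In the integral-depth case the field $E/F$ is unramified and $\bar{J} \subset \bar{U}_\MM = \bar{K}$, so $\bar{J}$ fixes $v_0$. The image of $\bar{\dedekind}_E^\times \subset \bar{J}$ in the reductive quotient $\bar{U}_\MM / \bar{U}_\MM^1 \cong \mathrm{SL}_2(\mathfrak{k})$ is the norm-one subgroup of the non-split torus coming from $\mathfrak{k}_E^\times$, which acts on $\mathbb{P}^1(\mathfrak{k}) \cong \{\text{neighbours of } v_0\}$ without fixed points. Hence $\bar{J}$ fixes no neighbour of $v_0$, and by connectedness of the fixed subtree its fixed set reduces to $\{v_0\}$, giving $e_{\bar{\pi}} = 1$ archetype.

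In the half-integral-depth case, $E/F$ is ramified; with the standard embedding $\sqrt{\varpi} \mapsto \eta$ one has $\dedekind_E^\times \subset U_\II$, and a brief matrix computation (using $\eta^{-1} g \eta$ for $g \in \bar{U}_\II$) confirms $\bar{U}_\II \subset \bar{K} \cap \eta\bar{K}\eta^{-1}$. Hence $\bar{J} \subset \bar{U}_\II$ fixes both $v_0$ and $v_1$, which lie in distinct $\bar{G}$-orbits, yielding archetypes in each class and hence $e_{\bar{\pi}} = 2$. The main step requiring care is the integral-depth argument: one must genuinely rule out any extra fixed vertex of $\bar{J}$, and the anisotropic-torus calculation in $\mathrm{SL}_2(\mathfrak{k})$ is precisely what supplies this.
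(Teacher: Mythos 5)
Your proposal is correct and follows essentially the same route as the paper: reduce via Theorem~\ref{typesaresimple} and intertwining--implies--conjugacy to counting $\bar{G}$-conjugacy classes of maximal compacts containing a fixed $\bar{J}$, then handle the ramified case via $\bar{J}\subset\bar{U}_\II\subset\bar{K}\cap{}^\eta\bar{K}$ and the unramified case by showing $\bar{J}$ is not contained in a second maximal compact. The only difference is cosmetic: in the unramified case you phrase the obstruction as the anisotropic torus in $\mathrm{SL}_2(\mathfrak{k})$ acting on $\mathbb{P}^1(\mathfrak{k})$ without a global fixed point (plus convexity of the fixed subtree), whereas the paper exhibits the subgroup $\mu_{q+1}\subset\ker\NNN_{E/F}$ and notes that the Iwahori subgroup contains no element of order $q+1$; these are two phrasings of the same fact.
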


\begin{proof}
It remains only for us to count the number of archetypes obtained by inducing a maximal simple type contained in $\bar{\pi}$ up to maximal compact subgroups. If $\bar{\pi}$ is ramified then, up to conjugacy, any simple type for $\bar{\pi}$ is defined on a group contained in the Iwahori subgroup $\bar{U}_\II$ of $\bar{G}$, which is itself contained in both $\bar{K}$ and $^\eta\bar{K}$; hence ramified supercuspidals admit two archetypes. If $\bar{\pi}$ is unramified, it suffices to show that the subgroup $\bar{J}$ on which any simple type $\mu$ for $\bar{\pi}$ is defined embeds into precisely one $\bar{G}$-conjugacy class of maximal compact subgroups. Without loss of generality, we may as well assume that $\bar{J}\subseteq\bar{U}_\MM$. We have $\ker\NNN_{E/F}\subseteq\bar{J}\subseteq\bar{U}_\MM$, where $\NNN_{E/F}$ is the norm map on the (unramified) quadratic extension $E/F$ associated to $\bar{\pi}$. Suppose for contradiction that we also have $\bar{J}\subseteq{}^\eta\bar{U}_\MM$. As the group $\ker\NNN_{E/F}$ contains the group $\mu_{q+1}$ of $(q+1)$-th roots of unity, we would therefore also have $\mu_{q+1}\subset\bar{U}_\MM\cap{}^\eta\bar{U}_\MM=\bar{U}_\II$. However, the Iwahori subgroup contains no order $q+1$ elements, giving the desired contradiction.\\

Thus, the only way in which one might obtain two archetypes when $e_{\bar{\pi}}=1$ is if $\bar{\pi}$ contains simple types which are $G$-conjugate but not $\bar{G}$-conjugate; this clearly cannot be the case by the intertwining implies conjugacy property.
\end{proof}

This completely describes the number of archetypes contained in any supercuspidal representation of $\bar{G}$. We now prove a complementary result, which allows us to describe the relationship between the theories of archetypes for $\bar{G}$ and $G$. We first require a converse result to Lemma \ref{extensiontok}.

\begin{lemma}\label{restrictiontokbar}
Let $\pi$ be a supercuspidal representation of $G$, let $\mathfrak{s}=[G,\pi]_G$, and let $^G(K,\tau)$ be the unique $\mathfrak{s}$-archetype. Let $\bar{\pi}$ be an irreducible component of $\pi\downharpoonright_{\bar{G}}$. Then there exists a $g\in G$ and an irreducible component $\bar{\tau}$ of $^g\tau\downharpoonright_{^g\bar{K}}$ such that $^{\bar{G}}({}^g\bar{K},\bar{\tau})$ is an archetype for $\bar{\pi}$.
\end{lemma}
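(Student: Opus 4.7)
The plan is to locate a simple type for $\bar\pi$ inside the Mackey decomposition of $\pi\downharpoonright_{\bar G}$ arising from a simple type for $\pi$, and then to match this with an irreducible summand of the restriction of a suitable $G$-conjugate of $\tau$ via a second Mackey decomposition. By Theorem \ref{unicitygl2} we may write $\tau = \cInd_J^K\ \lambda$ for $(J,\lambda)$ a maximal simple type for $\pi$, extending to $(\tilde J,\Lambda)$ with $\pi \simeq \cInd_{\tilde J}^G\ \Lambda$. Using the Mackey decomposition
\[
\pi\downharpoonright_{\bar G} \simeq \bigoplus_{g \in \bar G\backslash G/\tilde J} \cInd_{{}^g\bar J}^{\bar G}\ {}^g\bar\lambda
\]
together with the description of $\bar G$-simple types as irreducible components of the representations ${}^g\bar\lambda$ recalled in the preliminaries, we may choose a double coset representative $g_0 \in G$ and an irreducible component $\mu$ of ${}^{g_0}\bar\lambda$ for which $({}^{g_0}\bar J,\mu)$ is a maximal $\bar G$-simple type contained in $\bar\pi$.

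Set $\mathscr K := {}^{g_0}\bar K$; since $\bar G$ is normal in $G$ this is a maximal compact subgroup of $\bar G$ containing ${}^{g_0}\bar J$, and Theorem \ref{typesaresimple}(ii) identifies the (unique up to isomorphism) $[\bar G,\bar\pi]_{\bar G}$-typical representation of $\mathscr K$ as $\bar\tau := \cInd_{{}^{g_0}\bar J}^{\mathscr K}\ \mu$. So ${}^{\bar G}(\mathscr K,\bar\tau)$ is the desired archetype, and it only remains to exhibit $\bar\tau$ as an irreducible component of ${}^{g_0}\tau\downharpoonright_{\mathscr K}$. For this, applying Mackey to
\[
{}^{g_0}\tau\downharpoonright_{\mathscr K} = \Res_{{}^{g_0}\bar K}^{{}^{g_0}K}\cInd_{{}^{g_0}J}^{{}^{g_0}K}\ {}^{g_0}\lambda
\]
and taking the summand indexed by the trivial double coset (observing that ${}^{g_0}\bar K \cap {}^{g_0}J = {}^{g_0}\bar J$) yields $\cInd_{{}^{g_0}\bar J}^{\mathscr K}\ {}^{g_0}\bar\lambda$ as a direct summand. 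Since $\mu$ is an irreducible summand of ${}^{g_0}\bar\lambda$, induction produces the desired embedding $\bar\tau \hookrightarrow \cInd_{{}^{g_0}\bar J}^{\mathscr K}\ {}^{g_0}\bar\lambda \hookrightarrow {}^{g_0}\tau\downharpoonright_{\mathscr K}$.

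The main point to check is that the same element $g_0$ can be used to govern both decompositions: the first selects a $\bar G$-conjugacy class of simple types appearing in $\bar\pi$, while the second selects a $G$-conjugate of $(K,\tau)$ whose restriction to $\bar G$ witnesses that class. Both reductions are controlled by the chain ${}^{g_0}\bar J \subset {}^{g_0}\bar K \subset {}^{g_0}K$, so the alignment is automatic; beyond this, no ingredients beyond Theorem \ref{typesaresimple} and standard Mackey theory are required.
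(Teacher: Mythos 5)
Your proof is correct but takes a genuinely different (and arguably cleaner) route from the paper's. The paper works from the restriction side: it enumerates the irreducible components $\bar{\tau}_j$ of $\tau\downharpoonright_{\bar{K}}$, shows by a Frobenius reciprocity argument that any irreducible representation of $\bar{G}$ containing some $\bar{\tau}_j$ must occur in $\pi\downharpoonright_{\bar{G}}$ (hence lies in the single $G$-conjugacy class of $\bar{\pi}$), conjugates by an appropriate $g$ to align the chosen component with $\bar{\pi}$, and then re-establishes typicality of $({}^g\bar{K},{}^g\bar{\tau}_j)$ directly via a Mackey computation together with the intertwining-implies-conjugacy property. You instead start on the type side: you extract a maximal $\bar{G}$-simple type $({}^{g_0}\bar{J},\mu)$ for $\bar{\pi}$ from the Mackey decomposition of $\pi\downharpoonright_{\bar{G}}$, invoke Theorem \ref{typesaresimple}(ii) wholesale to identify $\cInd_{{}^{g_0}\bar{J}}^{{}^{g_0}\bar{K}}\ \mu$ as the archetype, and then run a second Mackey decomposition (conjugated form of Lemma \ref{lemma}) to locate it inside ${}^{g_0}\tau\downharpoonright_{{}^{g_0}\bar{K}}$. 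The observation that a single $g_0$ from $\bar{G}\backslash G/\tilde{J}$ simultaneously controls which summand of $\pi\downharpoonright_{\bar{G}}$ contains $\bar{\pi}$ and which $G$-conjugate of $K$ to restrict $\tau$ through is exactly the right alignment, and is automatic since both decompositions are pushed forward through the chain ${}^{g_0}\bar{J}\subset{}^{g_0}\bar{K}\subset{}^{g_0}K$. Your approach is more modular and avoids re-deriving the intertwining argument that was already spent in proving Theorem \ref{typesaresimple}(ii); the paper's is slightly more self-contained but duplicates that effort.
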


\begin{proof}
We may assume without loss of generality, by conjugating by $\eta$ if necessary, that $\bar{\pi}=\cInd_{\bar{K}}^{\bar{G}}\ \rho$, where $\rho=\cInd_{\bar{J}}^{\bar{K}}\ \mu$ is the induction to $\bar{K}$ of a $\bar{G}$-simple type. Let $\{\bar{\tau}_j\}$ be the (finite) set of irreducible components of $\tau\downharpoonright_{\bar{K}}$. We first show that any $\pi'\in\Irr(\bar{G})$ containing one of the $\bar{\tau}_j$ upon restriction must appear in the restriction to $\bar{G}$ of $\pi$. We have
\begin{align*}
0&\neq\bigoplus_j\Hom_{\bar{K}}(\bar{\tau}_j,\pi')\\
&=\Hom_{\bar{K}}(\Res_{\bar{K}}^K\ \tau,\Res_{\bar{K}}^{\bar{G}}\ \pi')\\
&=\Hom_{\bar{G}}(\cInd_{\bar{K}}^{\bar{G}}\Res_{\bar{K}}^K\ \tau,\pi'),
\end{align*}
and so we obtain $\pi'\twoheadleftarrow\cInd_{\bar{K}}^{\bar{G}}\Res_{\bar{K}}^K\ \tau\hookrightarrow\Res_{\bar{G}}^G\cInd_K^G\ \tau$. Every irreducible subquotient of the representation $\cInd_K^G\ \tau$ is a twist of $\pi$, and hence coincides with $\pi$ upon restriction to $\bar{G}$, so that any such representation $\pi'$ must be a subrepresentation of the restriction to $\bar{G}$ of $\pi$. Hence the possible representations $\pi'$ all lie in a single $G$-conjugacy class of representations of $\bar{G}$. Let $g\in G$ be such that $^g\pi'\simeq\bar{\pi}$, so that $\pi'\simeq\cInd_{^{g}\bar{K}}^{\bar{G}}\ ^g\rho$, and choose $j$ so that $\pi'$ contains $\bar{\tau}_j$. We claim that $(^g\bar{K},{}^g\bar{\tau}_j)$ is the required type.\\

It suffices to show that any $G$-conjugate of $\bar{\pi}$ containing $({}^g\bar{K},{}^g\bar{\tau}_j)$ is isomorphic to $\bar{\pi}$. Suppose that, for some $h\in G$, we have $\Hom_{^g\bar{K}}({}^h\bar{\pi},{}^g\bar{\tau}_j)\neq 0$. The representation $^h\bar{\pi}$ is of the form $^h\bar{\pi}=\cInd_{^h\bar{J}}^{\bar{G}}\ ^h\mu$ and, using Lemma \ref{lemma}, we see that the representation $^g\bar{\tau}_j$ must be induced from some $\bar{G}$-simple type $(\bar{J}',\mu')$, say. Then
\begin{align*}
0&\neq\Hom_{^g\bar{K}}(\Res_{^g\bar{K}}^{\bar{G}}\ \bar{\pi},{}^g\bar{\tau}_j)\\
&=\Hom_{\bar{J}'}(\Res_{\bar{J}'}^{\bar{G}}\cInd_{^h\bar{J}}^{\bar{G}}\ ^h\mu,\mu')\\
&=\bigoplus_{^h\bar{J}\backslash\bar{G}/\bar{J}'}\Hom_{\bar{J}'}(\cInd_{^{xh}\bar{J}\cap\bar{J}'}^{\bar{J}'}\Res_{^{xh}\bar{J}\cap\bar{J}'}^{^{xh}\bar{J}}\ ^{xh}\mu,\mu')\\
&=\bigoplus_{^h\bar{J}\backslash\bar{G}/\bar{J}'}\Hom_{^{xh}\bar{J}\cap\bar{J}'}(\Res_{^{xh}\bar{J}\cap\bar{J}'}^{^{xh}\bar{J}}\ ^{xh}\mu,\Res_{^{xh}\bar{J}\cap\bar{J}'}^{\bar{J}'}\ \mu').
\end{align*}
Then $^h\mu$ and $\mu'$ must intertwine in $\bar{G}$, and the intertwining implies conjugacy property shows that the types $^h\mu$ and $\mu'$ must actually be $\bar{G}$-conjugate, and hence $^h\bar{\pi}$ is $\bar{G}$-conjugate to $^g\pi'\simeq\bar{\pi}$. Therefore $^h\bar{\pi}\simeq\bar{\pi}$, and the result follows.
\end{proof}

We are then able to give a description of the relationship between the archetypes in the two groups $G$ and $\bar{G}$ in terms of $L$-packets.

\begin{proposition}\label{lpackets}
Let $\pi$ be a supercuspidal representation of $G$, let $\mathfrak{s}=[G,\pi]_G$, and let $^G(K,\tau)$ be the unique $\mathfrak{s}$-archetype. Let $\Pi$ be the $L$-packet of irreducible components of $\pi\downharpoonright_{\bar{G}}$. Then the set of archetypes for the representations in $\Pi$ is precisely the set of the $^{\bar{G}}(\mathscr{K},\bar{\tau})$, for $(\mathscr{K},\bar{\tau})$ an irreducible component of either $\tau\downharpoonright_{\bar{K}}$ or $^\eta\tau\downharpoonright_{^\eta\bar{K}}$.
\end{proposition}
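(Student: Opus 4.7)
The plan is to prove both inclusions separately, exploiting the fact that up to $\bar{G}$-conjugation there are only two maximal compact subgroups to consider.

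For the inclusion showing every archetype of an element of $\Pi$ arises in the stated way, let $^{\bar{G}}(\mathscr{K}, \bar{\tau})$ be an archetype for some $\bar{\pi} \in \Pi$; after $\bar{G}$-conjugation we may assume $\mathscr{K} \in \{\bar{K}, {}^{\eta}\bar{K}\}$. When $\mathscr{K} = \bar{K}$, I apply Lemma \ref{extensiontok}: the $K$-representation $\Psi$ produced there is a $[G,\pi]_G$-type containing $\bar{\tau}$ upon restriction to $\bar{K}$. By Theorem \ref{unicitygl2} the $[G,\pi]_G$-archetype is unique, so $\Psi$ and $\tau$ are $G$-conjugate as $K$-representations; since $N_G(K) = Z(G) \cdot K$, any such conjugation acts trivially on $K$-isomorphism classes, yielding $\Psi \simeq \tau$ as $K$-representations. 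Hence $\bar{\tau}$ appears in $\tau\downharpoonright_{\bar{K}}$. When $\mathscr{K} = {}^{\eta}\bar{K}$, I conjugate everything by $\eta^{-1}$: the pair $(\bar{K}, {}^{\eta^{-1}}\bar{\tau})$ is an archetype for $^{\eta^{-1}}\bar{\pi}$, which again lies in $\Pi$ since $\eta^{-1} \in G$ permutes the irreducible $\bar{G}$-constituents of $\pi\downharpoonright_{\bar{G}}$. Applying the previous case and conjugating back by $\eta$ realizes $\bar{\tau}$ as a component of $^{\eta}\tau\downharpoonright_{^{\eta}\bar{K}}$.

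For the reverse inclusion, take an irreducible component $\bar{\tau}'$ of $\tau\downharpoonright_{\bar{K}}$. By Lemma \ref{lemma}, $\bar{\tau}' \simeq \cInd_{\bar{J}'}^{\bar{K}} \mu'$ for some maximal $\bar{G}$-simple type $(\bar{J}', \mu')$, and Theorem \ref{typesaresimple}(ii) ensures that $^{\bar{G}}(\bar{K}, \bar{\tau}')$ is an archetype for the supercuspidal $\bar{\pi}' := \cInd_{\bar{J}'}^{\bar{G}} \mu'$. To see $\bar{\pi}' \in \Pi$, use the explicit Mackey decomposition from the proof of Lemma \ref{lemma}: writing $\tau = \cInd_J^K \lambda$, we have $\tau\downharpoonright_{\bar{K}} = \bigoplus_g \cInd_{^g\bar{J}}^{\bar{K}} {}^g\bar{\lambda}$, so the simple type $(\bar{J}', \mu')$ underlying $\bar{\tau}'$ arises as a constituent of some $(^g\bar{J}, {}^g\bar{\lambda})$; consequently $\bar{\pi}'$ is a component of $\cInd_{^g\bar{J}}^{\bar{G}} {}^g\bar{\lambda}$, which is in turn a summand of the $\bar{G}$-Mackey decomposition $\pi\downharpoonright_{\bar{G}} = \bigoplus_g \cInd_{^g\bar{J}}^{\bar{G}} {}^g\bar{\lambda}$. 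The analogous argument, after $\eta^{-1}$-conjugation, handles components of $^{\eta}\tau\downharpoonright_{^{\eta}\bar{K}}$, whose corresponding supercuspidals again lie in $\Pi$ since $\eta$-conjugation preserves $\Pi$.

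The main bookkeeping obstacle is ensuring the passage between $\bar{K}$ and $^{\eta}\bar{K}$ is handled compatibly: this hinges on $\Pi$ being $G$-stable (equivalently, on $\pi\downharpoonright_{\bar{G}}$ being $G$-stable), so that $\eta$-conjugation permutes $\Pi$ and statements about one conjugacy class of maximal compact subgroups translate freely into statements about the other. Everything else is a formal combination of Lemmas \ref{extensiontok} and \ref{lemma}, Theorem \ref{typesaresimple}, and the uniqueness of the $G$-archetype provided by Theorem \ref{unicitygl2}.
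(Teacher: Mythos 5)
Your proof is correct, and the forward inclusion coincides exactly with the paper's argument (Lemma \ref{extensiontok} plus unicity of the $G$-archetype, together with the fact that $N_G(K)=Z(G)K$ and so $G$-conjugacy of $K$-representations is just isomorphism). The reverse inclusion, however, is handled differently. The paper invokes Clifford theory to observe that the irreducible constituents of $\tau\downharpoonright_{\bar{K}}$ form a single $K$-orbit — hence if one is typical for an element of $\Pi$ then all are, since $\Pi$ is $G$-stable — and then uses Lemma \ref{restrictiontokbar} to exhibit one such component. You instead bypass Lemma \ref{restrictiontokbar} entirely: you use Lemma \ref{lemma} to write an arbitrary component $\bar{\tau}'$ as $\cInd_{\bar{J}'}^{\bar{K}}\mu'$ for a $\bar{G}$-simple type, apply Theorem \ref{typesaresimple}(ii) to see that $\bar{\tau}'$ is then the archetype of $\bar{\pi}':=\cInd_{\bar{J}'}^{\bar{G}}\mu'$, and then match the Mackey decomposition of $\tau\downharpoonright_{\bar{K}}$ term-by-term against that of $\pi\downharpoonright_{\bar{G}}$ to place $\bar{\pi}'$ in $\Pi$. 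Both routes work; yours is more computational but handles every component directly and does not require the existence statement of Lemma \ref{restrictiontokbar}, whereas the paper's argument is shorter once that lemma is in hand. One point worth stating explicitly in your write-up: the index sets in the two Mackey decompositions are different (representatives of $J\bar{K}\backslash K$ inside $K$, versus representatives of $\bar{G}\backslash G/\tilde{J}$), so one should note that for any $g\in K\subset G$ the representation $\cInd_{{}^g\bar{J}}^{\bar{G}}\ {}^g\bar{\lambda}$ depends only on the double coset $\bar{G}g\tilde{J}$ (since $\tilde{J}$ normalizes $(J,\lambda)$) and is therefore isomorphic to exactly one of the summands of $\pi\downharpoonright_{\bar{G}}$; with that remark the identification of $\bar{\pi}'$ as an element of $\Pi$ is complete.
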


\begin{proof}
We show that the set of typical representations of $\bar{K}$ for some $\bar{\pi}\in\Pi$ is equal to the set of irreducible components of $\tau\downharpoonright_{\bar{K}}$; the general result then follows immediately. Let $(\bar{K},\bar{\tau})$ be an archetype for some $\bar{\pi}\in\Pi$. Applying Lemma \ref{extensiontok}, $\bar{\tau}$ is of the required form. Conversely, the irreducible components of $\tau\downharpoonright_{\bar{K}}$ are all $K$-conjugate by Clifford theory, and so if one of them is a type for some element of $\Pi$ then they all must be. Applying Lemma \ref{restrictiontokbar}, at least one of these irreducible components must be a type for some $\bar{\pi}\in\Pi$.
\end{proof}

\begin{corollary}[Inertial local Langlands correspondence for $\SL2(F)$]\label{inertialcorrespondence}
Let $\varphi:I_F\rightarrow\PGL_2(\C)$ be a representation extending to an irreducible $L$-parameter $\tilde{\varphi}:W_F\rightarrow\PGL_2(\C)$. Then there exists a finite set $\{(\mathscr{K}_i,\tau_i)\}$ of smooth irreducible representations $\tau_i$ of maximal compact subgroups $\mathscr{K}_i$ of $\bar{G}$ such that, for all smooth, irreducible, infinite-dimensional representations $\pi$ of $\bar{G}$, we have that $\pi$ contains some $\tau_i$ upon restriction to $\mathscr{K}_i$ if and only if $\overline{\rec}(\pi)\downharpoonright_{I_F}\simeq\varphi$. Furthermore, this set is unique up to $\bar{G}$-conjugacy.
\end{corollary}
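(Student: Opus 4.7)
The plan is to transfer the inertial local Langlands correspondence for $G$ (Theorem \ref{unicitygl2} combined with the inertial description of $\rec$ recalled at the end of Section 1.4) across to $\bar{G}$ by means of Proposition \ref{lpackets}. First I would lift $\tilde{\varphi}$ through the natural surjection $\Hom(W_F,\GL2(\C))\twoheadrightarrow\Hom(W_F,\PGL_2(\C))$ to an irreducible $L$-parameter $\tilde{\psi}:W_F\rightarrow\GL2(\C)$ (which is possible since $\tilde{\varphi}\in\Ll^0(\bar{G})$ by hypothesis), so that $\pi_0:=\rec^{-1}(\tilde{\psi})$ is a supercuspidal representation of $G$. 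Let $^G(K,\tau)$ be the unique $[G,\pi_0]_G$-archetype afforded by Theorem \ref{unicitygl2}, and let $\Pi$ denote the $L$-packet consisting of the irreducible components of $\pi_0\downharpoonright_{\bar{G}}$. I would then declare $\{(\mathscr{K}_i,\tau_i)\}$ to be the finite family of irreducible constituents of $\tau\downharpoonright_{\bar{K}}$ together with those of $^\eta\tau\downharpoonright_{^\eta\bar{K}}$. By Proposition \ref{lpackets}, this family is a complete set of representatives for the archetypes of the members of $\Pi$, from which uniqueness up to $\bar{G}$-conjugacy is immediate.

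For the forward direction, if an infinite-dimensional $\pi\in\Irr(\bar{G})$ restricts to contain some $\tau_i$, then by Proposition \ref{lpackets} the pair $(\mathscr{K}_i,\tau_i)$ is $[\bar{G},\bar{\pi}]_{\bar{G}}$-typical for some $\bar{\pi}\in\Pi$; because this Bernstein component is a singleton, $\pi\simeq\bar{\pi}\in\Pi$, and hence $\overline{\rec}(\pi)\downharpoonright_{I_F}\simeq\tilde{\varphi}\downharpoonright_{I_F}=\varphi$. Conversely, suppose $\overline{\rec}(\pi)\downharpoonright_{I_F}\simeq\varphi$. Since $\tilde{\varphi}$ is irreducible, this pins $\pi$ down to being supercuspidal; picking a supercuspidal $\pi'$ of $G$ containing $\pi$ upon restriction, the parameter $\rec(\pi')$ is a $\GL2(\C)$-lift of $\overline{\rec}(\pi)$. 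Comparing $\rec(\pi')$ with $\tilde{\psi}$, their associated $\PGL_2$-parameters agree on $I_F$, so $\rec(\pi')$ and $\tilde{\psi}$ themselves agree on $I_F$ up to twisting by an unramified character of $W_F$ (the lifting ambiguity on the $\PGL_2$-side is absorbed into character twists on the $\GL2$-side, and unramified twists are precisely those preserving the restriction to $I_F$). The inertial description of $\rec$ then gives that $\pi'$ is $G$-inertially equivalent to $\pi_0$, so $\pi'\downharpoonright_{\bar{G}}$ and $\pi_0\downharpoonright_{\bar{G}}$ define the same $L$-packet $\Pi$; thus $\pi\in\Pi$, and a final application of Proposition \ref{lpackets} exhibits $\pi$ as containing some $\tau_i$.

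The main technical point I anticipate is the unramified-twist bookkeeping in the reverse direction: one must reconcile the choice of $\GL2$-lift of a $\PGL_2$-parameter with the unramified-character twists of supercuspidals on the $\GL2$-side in order to conclude that every such $\pi$ lands in the \emph{single} packet $\Pi$, rather than in a family of packets whose $\PGL_2$-parameters each restrict to $\varphi$. This matching works cleanly because all the remaining ambiguities on the Galois side become invisible after projecting to $\PGL_2$ and further restricting to $I_F$, which is exactly the information preserved by passage to $\bar{G}$ (the latter carrying no unramified characters).
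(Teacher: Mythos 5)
Your overall route is the paper's: single out the $L$-packet $\Pi$ that should correspond to $\tilde{\varphi}$, realize it as the components of $\pi_0\downharpoonright_{\bar{G}}$ for a supercuspidal $\pi_0$ of $G$ with the correct $\GL_2$-parameter, take the constituents of the $\GL_2$-archetype restricted to $\bar{K}$ and $^\eta\bar{K}$, and invoke Proposition~\ref{lpackets} to identify these with the archetypes of the members of $\Pi$. The forward direction and the uniqueness claim are handled correctly.

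The converse direction as you have written it contains a genuine error. You assert that because the associated $\mathbf{PGL}_2(\C)$-parameters of $\rec(\pi')$ and $\tilde{\psi}$ agree on $I_F$, the $\GL_2(\C)$-parameters themselves ``agree on $I_F$ up to twisting by an unramified character of $W_F$,'' and hence that $\pi'$ is $G$-inertially equivalent to $\pi_0$. This is false: the character of $I_F$ measuring the discrepancy between $\rec(\pi')\downharpoonright_{I_F}$ and $\tilde{\psi}\downharpoonright_{I_F}$ can be ramified. Concretely, when $\bar{\pi}$ has ramification degree $2$ one may take $\pi'=\pi_0\otimes(\chi\circ\det)$ with $\chi$ a ramified quadratic character; then $\pi'$ still contains $\bar{\pi}$ on restriction to $\bar{G}$, but $\pi'$ is not $G$-inertially equivalent to $\pi_0$ and $\rec(\pi')\downharpoonright_{I_F}\not\simeq\tilde{\psi}\downharpoonright_{I_F}$. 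So the step ``$\pi'$ is inertially equivalent to $\pi_0$'' cannot be how one concludes.

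The fix is to stop working at the level of $\GL_2$-twists of $\pi_0$ and instead argue directly that $\overline{\rec}(\pi)\simeq\tilde{\varphi}$ as $\mathbf{PGL}_2(\C)$-parameters on all of $W_F$ (not just on $I_F$). Given the hypothesis $\overline{\rec}(\pi)\downharpoonright_{I_F}\simeq\varphi$, pick any $\GL_2$-lifts of $\overline{\rec}(\pi)$ and of $\tilde{\varphi}$; their restrictions to $I_F$ differ by a character of $I_F$ that extends to $W_F$, so after twisting one lift we may assume the lifts agree on $I_F$. Both lifts are irreducible, so they then differ by an unramified character of $W_F$, which projectivizes trivially; hence $\overline{\rec}(\pi)\simeq\tilde{\varphi}$ and $\pi\in\Pi$ by definition. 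Proposition~\ref{lpackets} then produces a $\tau_i$ contained in $\pi$. This is precisely the (tacit) content of the last equivalence ``$\pi\in\Pi$ if and only if $\overline{\rec}(\pi)\downharpoonright_{I_F}\simeq\varphi$'' in the paper's own chain; your final remark shows you sensed that the ambiguities must vanish after projecting to $\mathbf{PGL}_2$, but the concrete steps you wrote insert the projection a beat too late and land on a false inertial-equivalence claim.
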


\begin{proof}
Let $\Pi=\overline{\rec}^{-1}(\tilde{\varphi})$ be the $L$-packet corresponding to $\tilde{\varphi}$, so that $\Pi$ is the set of irreducible components upon restriction to $\bar{G}$ of some supercuspidal representation $\sigma$ of $G$. Let $\psi=\rec(\sigma)$, so that, by Corollary 8.2 of \cite{paskunas2005unicity}, there exists a unique smooth irreducible representation $\tau$ of $K$ such that, for all smooth, irreducible, infinite-dimensional representations $\rho$ of $G$, we have that $\rho$ contains $\tau$ upon restriction to $K$ if and only if $\rec(\rho)\downharpoonright_{I_F'}\simeq\psi\downharpoonright_{I_F'}$. Then $^G(K,\tau)$ is the unique archetype for $\sigma$, and the set $\{{}^{\bar{G}}(\mathscr{K}_i,\tau_i)\}$ of archetypes for $\Pi$ is precisely that represented by the finite set of irreducible components of $\tau\downharpoonright_{\bar{K}}$ and $^\eta\tau\downharpoonright_{^\eta\bar{K}}$. Let $\SS$ be the set of $\bar{G}$-inertial equivalence classes of representations in $\Pi$. Then, as each of the $(\mathscr{K}_i,\tau_i)$ is an archetype, it follows that, for all smooth, irreducible, infinite-dimensional representations $\pi$ of $\bar{G}$, we have that $\pi$ contains one of the $\tau_i$ upon restriction to $\mathscr{K}_i$ if and only if $[\bar{G},\pi]_{\bar{G}}\in\SS$, if and only if $\pi\in\Pi$, if and only if $\overline{\rec}(\pi)\downharpoonright_{I_F}\simeq\varphi$, as required.
\end{proof}

\bibliographystyle{amsalpha}
\addcontentsline{toc}{section}{References}
\bibliography{Archetypes}

\def\cprime{$'$}
\providecommand{\bysame}{\leavevmode\hbox to3em{\hrulefill}\thinspace}
\providecommand{\MR}{\relax\ifhmode\unskip\space\fi MR }
\providecommand{\MRhref}[2]{%
  \href{http://www.ams.org/mathscinet-getitem?mr=#1}{#2}
}
\providecommand{\href}[2]{#2}
\begin{thebibliography}{BK93b}

\bibitem[Ber84]{bernstein1984centre}
J.~N. Bernstein, \emph{Le ``centre'' de {B}ernstein}, Representations of
  reductive groups over a local field, Travaux en Cours, Hermann, Paris, 1984,
  Edited by P. Deligne, pp.~1--32. \MR{771671 (86e:22028)}

\bibitem[BH06]{bushnell2006langlands}
Colin~J. Bushnell and Guy Henniart, \emph{The local {L}anglands conjecture for
  {$\rm GL(2)$}}, Grundlehren der Mathematischen Wissenschaften [Fundamental
  Principles of Mathematical Sciences], vol. 335, Springer-Verlag, Berlin,
  2006. \MR{2234120 (2007m:22013)}

\bibitem[BK93a]{bushnell1993admissible}
Colin~J. Bushnell and Philip~C. Kutzko, \emph{The admissible dual of {${\rm
  GL}(N)$} via compact open subgroups}, Annals of Mathematics Studies, vol.
  129, Princeton University Press, Princeton, NJ, 1993. \MR{1204652
  (94h:22007)}

\bibitem[BK93b]{bushnell1993sln}
\bysame, \emph{The admissible dual of {${\rm SL}(N)$}. {I}}, Ann. Sci. \'Ecole
  Norm. Sup. (4) \textbf{26} (1993), no.~2, 261--280. \MR{1209709 (94a:22033)}

\bibitem[BK94]{bushnell1994sln}
\bysame, \emph{The admissible dual of {${\rm SL}(N)$}. {II}}, Proc. London
  Math. Soc. (3) \textbf{68} (1994), no.~2, 317--379. \MR{1253507 (94k:22035)}

\bibitem[BK98]{bushnell1998structure}
\bysame, \emph{Smooth representations of reductive {$p$}-adic groups: structure
  theory via types}, Proc. London Math. Soc. (3) \textbf{77} (1998), no.~3,
  582--634. \MR{1643417 (2000c:22014)}

\bibitem[BK99]{bushnell1999semisimple}
\bysame, \emph{Semisimple types in {${\rm GL}_n$}}, Compositio Math.
  \textbf{119} (1999), no.~1, 53--97. \MR{1711578 (2000i:20072)}

\bibitem[BM02]{breuil2002multiplicites}
Christophe Breuil and Ariane M{\'e}zard, \emph{Multiplicit\'es modulaires et
  repr\'esentations de {${\rm GL}_2({\bf Z}_p)$} et de {${\rm
  Gal}(\overline{\bf Q}_p/{\bf Q}_p)$} en {$l=p$}}, Duke Math. J. \textbf{115}
  (2002), no.~2, 205--310, With an appendix by Guy Henniart. \MR{1944572
  (2004i:11052)}

\bibitem[GK82]{gelbart1982sln}
S.~S. Gelbart and A.~W. Knapp, \emph{{$L$}-indistinguishability and {$R$}
  groups for the special linear group}, Adv. in Math. \textbf{43} (1982),
  no.~2, 101--121. \MR{644669 (83j:22009)}

\bibitem[GR02]{goldberg2002types}
David Goldberg and Alan Roche, \emph{Types in {${\rm SL}_n$}}, Proc. London
  Math. Soc. (3) \textbf{85} (2002), no.~1, 119--138. \MR{1901371
  (2003d:22018)}

\bibitem[Kim07]{kim2007exhaustion}
Ju-Lee Kim, \emph{Supercuspidal representations: an exhaustion theorem}, J.
  Amer. Math. Soc. \textbf{20} (2007), no.~2, 273--320 (electronic).
  \MR{2276772 (2008c:22014)}

\bibitem[LL79]{labesse1979sl2}
J.-P. Labesse and R.~P. Langlands, \emph{{$L$}-indistinguishability for {${\rm
  SL}(2)$}}, Canad. J. Math. \textbf{31} (1979), no.~4, 726--785. \MR{540902
  (81b:22017)}

\bibitem[MS14]{Miyauchi2014semisimple}
Michitaka Miyauchi and Shaun Stevens, \emph{Semisimple types for {$p$}-adic
  classical groups}, Math. Ann. \textbf{358} (2014), no.~1-2, 257--288.
  \MR{3157998}

\bibitem[Nad14]{nadimpalli2014gl3}
Santosh Nadimpalli, \emph{Typical representations of {$GL_3(F)$}},
  arXiv:1405.7286 (2014).

\bibitem[Nev13]{nevins2013supercuspidal}
Monica Nevins, \emph{Branching rules for supercuspidal representations of
  {${\rm SL}_2(k)$}, for {$k$} a {$p$}-adic field}, J. Algebra \textbf{377}
  (2013), 204--231. \MR{3008903}

\bibitem[Pas05]{paskunas2005unicity}
Vytautas Paskunas, \emph{Unicity of types for supercuspidal representations of
  {${\rm GL}_N$}}, Proc. London Math. Soc. (3) \textbf{91} (2005), no.~3,
  623--654. \MR{2180458 (2007b:22018)}

\bibitem[S{\'e}c05]{secherre2005simpletypes}
Vincent S{\'e}cherre, \emph{Repr\'esentations lisses de {${\rm GL}_m(D)$}.
  {III}. {T}ypes simples}, Ann. Sci. \'Ecole Norm. Sup. (4) \textbf{38} (2005),
  no.~6, 951--977. \MR{2216835 (2007a:22010)}

\bibitem[SS08]{secherre2008supercuspidals}
V.~S{\'e}cherre and S.~Stevens, \emph{Repr\'esentations lisses de {${\rm
  GL}_m(D)$}. {IV}. {R}epr\'esentations supercuspidales}, J. Inst. Math.
  Jussieu \textbf{7} (2008), no.~3, 527--574. \MR{2427423 (2009d:22023)}

\bibitem[SS12]{secherre2012semisimple}
Vincent S{\'e}cherre and Shaun Stevens, \emph{Smooth representations of
  {$GL_m(D)$} {VI}: semisimple types}, Int. Math. Res. Not. IMRN (2012),
  no.~13, 2994--3039. \MR{2946230}

\bibitem[Ste08]{stevens2008supercuspidals}
Shaun Stevens, \emph{The supercuspidal representations of {$p$}-adic classical
  groups}, Invent. Math. \textbf{172} (2008), no.~2, 289--352. \MR{2390287
  (2010e:22008)}

\bibitem[Yu01]{yu2001construction}
Jiu-Kang Yu, \emph{Construction of tame supercuspidal representations}, J.
  Amer. Math. Soc. \textbf{14} (2001), no.~3, 579--622 (electronic).
  \MR{1824988 (2002f:22033)}

\end{thebibliography}

\end{document}